\documentclass[12pt, letterpaper]{article}
\usepackage[utf8]{inputenc}
\usepackage{amsmath,amssymb,amsfonts,amsthm}
\usepackage{bm}
\usepackage[margin=2cm]{geometry}
\usepackage{graphicx} 
\usepackage{xcolor}
\usepackage{url}
\usepackage{mathtools}
\usepackage{todonotes}
\newtheorem{theorem}{Theorem}[section]

\newtheorem{remark}{Remark}
\title{Optimal Control of Traveler Testing in Epidemic Models}
\author{Maria Teresa Chiri$^1$
	\and Christopher Denaro$^2$
    \and Xiaoqian Gong$^3$
    \and Benedetto Piccoli$^4$
	}

\date{%
	\small\textit{$^1$Department of Mathematics and Statistics, Queen's University, Kingston, ON, Canada\\	(maria.chiri@queensu.ca)\\%
	$^2$Department of Mathematical Sciences and Center for Computational and Integrative Biology, Rutgers University, Camden, NJ, USA, \\ (cad373@scarletmail.rutgers.edu)}\\
   $^3$ Department of Mathematics, Amherst College Amherst, MA, USA,\\
(xgong@amherst.edu) \\
$^4$Department of Mathematical Sciences and Center for Computational and Integrative Biology, Rutgers University, Camden, NJ, USA,\\ (piccoli@camden.rutgers.edu)\\[2ex]%
	\today
}

\begin{document}

\maketitle
\begin{abstract}
    We propose an optimal control framework for mitigating cross-border epidemic spread through testing of incoming travelers. The general model considers multiple countries with SEIR dynamics, while the explicit analysis focuses on a simplified two-country SIR system without vaccination. The control variable is a time-measurable function representing the testing rate. For the two-country SIR case, we show the optimal control is bang–bang by excluding singular arcs via Hamiltonian analysis and derive the optimal switching time. Our results offer theoretical insights and practical guidelines for designing cost-effective border testing policies. We then present numerical simulations for the optimal control in the two-population setting.
\end{abstract}
\section{Introduction}
The ongoing challenge of controlling infectious diseases has motivated extensive research into optimal intervention strategies within compartmental epidemic models such as the Susceptible-Infectious-Recovered (SIR)
and Susceptible-Exposed-Infected-Recovered (SEIR). 
A broad body of work has analyzed time-optimal control problems, revealing that many effective policies exhibit bang–bang characteristics—switching sharply from no intervention to maximal effort—to balance epidemic duration and infection burden \cite{balderrama2022optimal,bliman2021optimal,bolzoni2017time}. Importantly, these studies demonstrate a critical trade-off: minimizing outbreak length may conflict with reducing the total number of cases, sometimes leading to delayed intervention even when early control might intuitively seem preferable \cite{bolzoni2017time,bolzoni2014react}.

Extensions to these classical models incorporate practical considerations such as finite intervention windows, resource constraints, and economic costs. For example, limited-duration quarantines and social distancing measures have been optimized to minimize long-term infections while balancing socioeconomic impacts \cite{balderrama2022optimal,bliman2021optimal}. Other investigations explicitly consider costs, finding that optimal strategies may combine vaccination, isolation, or culling efforts in nontrivial ways depending on resource availability and disease dynamics \cite{ketcheson,behncke2000optimal,hansen2011optimal,bolzoni2014react}. Time-optimal policies under limited control budgets or mixed interventions (e.g., vaccination and isolation) exhibit rich structures and often require careful coordination between strategy onset and duration \cite{bolzoni2014react,hansen2011optimal}.
The COVID-19 pandemic gave rise to multiple studies motivated by minimizing death to the population \cite{rowan2021disposable}, easing out of the non-pharmaceutical interventions \cite{vardavas2021modeling}, studying the role of behavior in transmission \cite{allred2022covid}, and modeling of longer time-spans for decision makers \cite{badoual2021vaccination, celani2022endemic}.  
Moreover, several papers considered specific control problems, including
modeling the containment measures \cite{Gatto2020}, discussing the controllability of the pandemic in the early phase \cite{casella2020can}, 
and timing the interventions
\cite{aronna2021model,gevertz2021novel,perkinsoptimal}.

Beyond single-region models, recent efforts have focused on multi-region epidemic spread interconnected by travel, emphasizing the importance of coordinated policy responses across countries or regions. Hybrid dynamical systems and automata-based frameworks have been employed to model discrete travel restrictions alongside continuous disease dynamics, capturing the timing and triggering of travel bans based on epidemiological thresholds \cite{carney2024using}. Additional layers of complexity arise when behavioral responses or multiple interventions are integrated into epidemic models. For example, evolutionary game-theoretic approaches have been coupled with SIR dynamics to study fast switching in vaccination strategies \cite{della2024geometric}, while extended SEIR models have been developed to evaluate combinations of social distancing, testing, contact tracing, and quarantining under economic constraints \cite{mcquade}. Such hybrid and multiscale approaches offer a more realistic representation of both policy implementation and public response, enabling the design of interventions that dynamically adapt to both epidemiological and socioeconomic conditions.

In this work, building on the modeling framework proposed in \cite{carney2024using}, we introduce a multi-country SIR system where countries are connected by travel flows, and the control variable is a time-measurable testing rate for incoming travelers.
While the formulation accommodates an arbitrary number of countries and explicitly includes vaccinated and unvaccinated compartments, we perform a detailed qualitative study for a simplified two-country SIR model without vaccination.
In this reduced setting, analysis of the associated Hamiltonian allows us to rule out the existence of singular controls.
While the control variable is allowed to be any measurable function of time, the analysis of the Hamiltonian reveals that the optimal solution necessarily takes the form of a bang–bang control. This structural property introduces discrete switching times in an otherwise continuous system, thereby endowing the overall dynamics with a hybrid character.
We then determine the optimal switching time analytically, identifying the precise moment at which testing should shift from full deployment to suspension (or vice versa).
This combination of a general multi-country framework and an explicit analytical characterization in a simplified setting provides both theoretical insight and practical guidance for the design of cost-effective border-screening strategies.\\

The remainder of the paper is organized as follows.
In Section \ref{S2}, we introduce a hierarchy of epidemic models with increasing complexity. We begin with the SIR model without vaccination, then extend it to the SEIR model without vaccination, and finally to the SEIR model with vaccination, thereby establishing a general framework capable of capturing different epidemiological and intervention settings.
In Section \ref{sec:optimal_control_problems}, we formulate and analyze the optimal control problem for the two-country SIR model without vaccination, where the control variable represents the testing rate applied to incoming travelers. The admissible controls are assumed to be measurable functions of time, and through quantitative analysis we demonstrate that the optimal policy must be of bang–bang type. This structure motivates restricting attention to piecewise constant controls, corresponding to distinct phases of intervention. In particular, we examine switching patterns such as transitions
from 0 to 1 (representing a shift from no testing to maximal testing),
from 1 to 0 (the suspension of testing), and sequences such as 0–1–0, representing temporary testing campaigns. For each configuration, we analytically determine the corresponding optimal switching times.
Finally, Section \ref{S4} presents numerical simulations illustrating the analytical results and highlighting how the timing and intensity of border-testing interventions influence epidemic trajectories and inter-country transmission dynamics.

\section{The general model}\label{S2}
\subsection{Hybrid SIR Model Without Vaccination}

We introduce a Susceptible--Infected--Removed (SIR) model accounting for inter-regional mobility and localized testing policies across \(N\) regions. Within each region, the population is divided into three compartments: \( S \) (Susceptible), consisting of individuals at risk of infection; \( I \) (Infected), representing symptomatically infected individuals who can transmit the disease; and \( R \) (Removed), which includes those who have either recovered or died from the disease.

The progression of disease within each region is governed by differential equations using two key parameters: the transmission rate \( \beta_i \), indicating the likelihood of transmission of disease in the region \( i \), and the recovery rate \( \gamma_i \), which represents the rate at which infected individuals recover. Simultaneously, regional testing policies are encoded through control variables \( u_i \in [0, 1]\), where \( u_i\) corresponds to the implementation of the testing rate for incoming individuals in region \( i \). That is, \(u_i=0\) indicates no testing of incoming individuals, while \(u_i=1\) corresponds to full testing implementation in region \(i\). 

Moreover, spatial coupling is achieved by incorporating inter-regional travel, further reinforcing the hybrid nature of the system. In addition to resident movement, a key innovation of this model is the explicit inclusion of visitors, who temporarily enter other regions and interact with local populations before returning to their home regions. Infectious visitors can seed new infections in the destination region, impacting local outbreaks. Although their stay is transient, they can significantly influence the spatial spread of the disease. Upon returning home, visitors may bring back new infections acquired while traveling, creating a feedback loop that links epidemic dynamics across regions. Mobility is modeled through travel weights: \( w_{ji}^{I} \) captures the rate of infected individuals moving from region \( j \) to region \( i \), and \( w_{ij}^{S} \) denotes the rate of susceptible individuals traveling from region \( i \) to region \( j \). Noting that 
\begin{align}
\label{eqn: travel_weights}
   \sum\limits_{j=1}^{N} \omega_{ij}^{S} =1, \text{ and } \sum\limits_{j=1}^{N} \omega_{ij}^{I} =1. 
\end{align}
We define the vector of infected populations across all regions as \( I = [I_1, I_2, \dots, I_N] \), and the vector of travel weights corresponding to the inflow of infected individuals into region \(i\) as
\[
W^{I}_{\cdot i} = [w_{1i}^{I}, w_{2i}^{I}, \dots, w_{Ni}^{I}]^{T} \in \mathbb{R}^{N}.
\]
The total inflow of infected individuals into region \(i\) is then given by the weighted sum of infected populations in all regions \(k = 1, \dots, N\), where each term is scaled by the corresponding travel rate to region \(i\). This is expressed as the inner product:
\[
\langle W^{I}_{\cdot i}, I \rangle = \sum\limits_{j=1}^{N} w_{ji}^{I} I_j.
\]


The dynamics of \(S_i, I_i, R_i\) populations in region \(i \in \{1, 2, \dots, N\}\) are given by the following:  
\begin{equation}
\begin{aligned} 
    & \frac{d}{dt} S_i(t) =- S_i(t) w^{S}_{ii}
    \beta_i (1-u_i)\sum\limits_{j \not = i} \omega_{ji}^I I_j(t)
     - S_i(t) \sum \limits_{j\not = i}w_{ij}^{S}
    \beta_jw^{I}_{jj}I_j(t) \\&\qquad \qquad \qquad-S_i(t) \sum \limits_{j\not = i}w_{ij}^{S}
    \beta_j\sum\limits_{k \not = j} (1-u_j)w_{kj}^I I_k - S_i(t)w_{ii}^S \beta_i w_{ii}^I I_i(t),\\
  & \frac{d}{dt} I_i(t) =S_i(t) w^{S}_{ii}
    \beta_i (1-u_i)
    \sum\limits_{j \not = i} w^{I}_{ji} I_{j}(t) + S_i(t)\sum\limits_{j \not = i}  w_{ij}^{S}
    \beta_jw^{I}_{jj}I_j(t) \\& \qquad \qquad \qquad+ S_i(t) \sum \limits_{j\not = i}w_{ij}^{S}
    \beta_j\sum\limits_{k \not = j} (1-u_j)w_{kj}^I I_k+ S_i(t)w_{ii}^S \beta_i w_{ii}^I I_i(t) -\gamma_i I_i(t), \\
      & \frac{d}{dt} R_i(t) = \gamma_iI_i(t),
\end{aligned}
\label{eqn: SIR}
\end{equation}
where for \(\forall t\), 
\begin{itemize}
  \item \(S_i(t) w^{S}_{ii} \beta_i (1 - u_i) \sum\limits_{j \ne i} w^{I}_{ji} I_j(t)\) characterizes the risk of infection for susceptible residents in region \(i\) resulting from contact with infected individuals who have traveled from other regions \(j \ne i\) to region \(i\) without adhering to the local testing policy enforced by region \(i\).

  \item \(S_i(t) \sum\limits_{j \ne i} w_{ij}^{S} \beta_j w^{I}_{jj} I_j(t)\) represents the exposure of susceptible individuals from region \(i\) who travel to other regions \(j \ne i\) and encounter infected residents in those destination regions.

  \item \(S_i(t) \sum\limits_{j \ne i} w_{ij}^{S} \beta_j \sum\limits_{k \ne j} (1 - u_j) w_{kj}^{I} I_k(t)\) accounts for the risk to susceptible individuals from region \(i\) who travel to regions \(j \ne i\), where they may be exposed to infected visitors from other regions \(k \ne j\) who are in region \(j\) without complying with that region’s testing protocol.

  \item \(S_i(t) w_{ii}^{S} \beta_i w_{ii}^{I} I_i(t)\) captures local disease transmission within region \(i\), arising from interactions between the susceptible and infected populations residing in the same region.

  \item \(\gamma_i I_i(t)\) represents the recovery of infected individuals in region \(i\), occurring at a constant rate \(\gamma_i\).
\end{itemize}
\begin{remark}
The term 
\(
S_i(t) \sum\limits_{j \neq i} w_{ij}^{S} \, \beta_j \sum\limits_{k \neq j} (1 - u_j) w_{kj}^{I} I_k(t)
\)
vanishes when we consider only two regions. More generally, this term is also negligible, since the number of infected visitors arriving from other regions is typically very small, especially under a strict testing policy in region \(j\).

\end{remark}
\subsection{Hybrid SEIR Model Without Vaccination}

We now extend the above SIR model to an SEIR framework by introducing a new compartment, \(E_i\), representing exposed  individuals in region \(i\). Unlike the SIR model, which assumes individuals become infectious immediately upon infection, the SEIR model incorporates an incubation period during which individuals are infected but not yet contagious. This modification requires us to reinterpretate certain parameters and introduce several new ones:
\begin{itemize}
    \item \(\beta_i\): the rate at which susceptible individuals become exposed through contact with  infectious individuals  in region \(i\);
    \item \(\sigma_i\): the rate at which exposed  individuals progress to become infected after an incubation (latent) period in region \(i\);
    \item \(w_{ji}^{E}\): the travel rate of exposed individuals moving from region \(j\) to region \(i\).
\end{itemize}

The evolution of the susceptible (\(S_i\)), exposed (\(E_i\)), infected (\(I_i\)), and recovered (\(R_i\)) populations in region \(i \in \{1, 2, \dots, N\}\) is governed by the following system of equations:
\begin{equation}
\label{eqn: SEIR}
\begin{aligned}    
    \frac{d}{dt} S_i(t) & =- S_i(t) w^{S}_{ii}(1-u_i)
    \beta_i \sum\limits_{j \not =i} w_{ji}^II_j(t)
     - S_i(t) \sum\limits_{j \neq i} w_{ij}^{S}
\beta_jw^{I}_{jj}I_j(t) \\
& \quad -S_i(t) \sum \limits_{j\not = i}w_{ij}^{S}\beta_j\sum\limits_{k \not = j} (1-u_j)w_{kj}^I I_k- S_i(t)w_{ii}^S \beta_i w_{ii}^I I_i(t),\\
\frac{d}{dt} E_i(t) & = S_i(t) w^{S}_{ii}(1-u_i)
    \beta_i \sum\limits_{j \not =i} w_{ji}^II_j(t)
     + S_i(t) \sum\limits_{j \neq i} w_{ij}^{S}
\beta_jw^{I}_{jj}I_j(t) \\
& \quad +S_i(t) \sum \limits_{j\not = i}w_{ij}^{S}\beta_j\sum\limits_{k \not = j} (1-u_j)w_{kj}^I I_k + S_i(t)w_{ii}^S \beta_i w_{ii}^I I_i(t) \\
& \quad \quad - \sigma_i E_i(t),\\
  \frac{d}{dt} I_i(t) & = \sigma_i E_i(t) -\gamma_i I_i(t),\\
 \frac{d}{dt} R_i(t) & = \gamma_i I_i(t),\\
\end{aligned}
\end{equation}
 where  the term \( \sigma_i E_i(t) \) captures the transition of exposed individuals in region \(i\) to the infected class at rate \(\sigma_i\), reflecting the end of the incubation period  at time \(t\). The remaining terms in 
  (\eqref{eqn: SEIR})
  are analogous to those in (\eqref{eqn: SIR}) with the primary distinction that the SEIR formulation explicitly incorporates an incubation phase during which individuals are infected but not yet infectious.
\subsection{Hybrid SEIR Model with Vaccination}

To model the transmission dynamics of infectious diseases in a partially vaccinated population distributed across multiple regions, we develop a hybrid SEIR (Susceptible--Exposed--Infectious--Recovered) framework that incorporates vaccination. This model extends the classical SEIR formulation by explicitly integrating vaccination status into the population structure. Specifically, individuals are stratified as either vaccinated or unvaccinated and are assumed to be mobile, traveling between regions either as residents or as short-term visitors. Each subgroup follows SEIR-type transitions, with transmission and recovery rates that vary based on both vaccination status and the geographic location in which individuals reside or travel.
\begin{figure}[ht]
    \centering
    \includegraphics[width=0.7\linewidth]{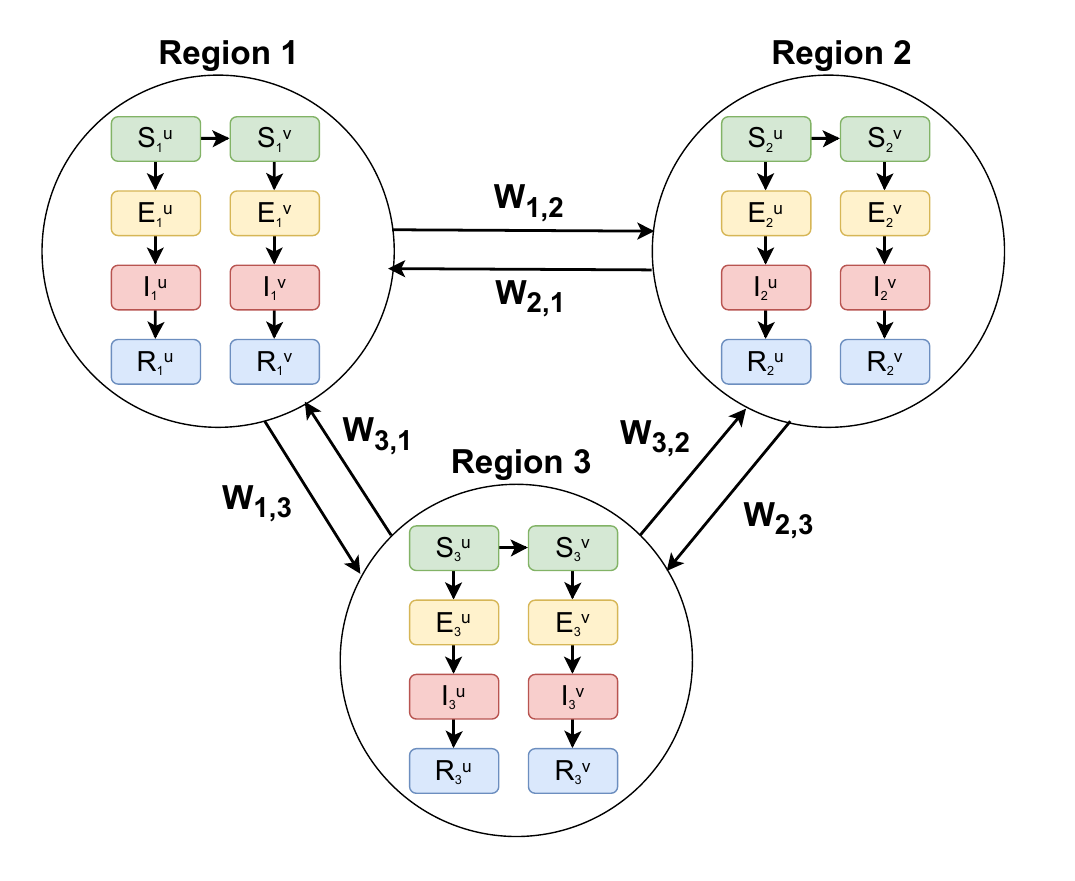}
    \caption[Hybrid SEIR Diagram]{A schematic representation of the hybrid SEIR model with separate compartments for vaccinated and unvaccinated individuals. This diagram demonstrates a scenario of three regions (denoted by circles) where each region contains populations of \(X_i^Y,\) where \(X \in \{S, E, I, R\}\), \(Y \in \{u, v\}\) and \(i \in \{1, 2, 3\}\). The arrows between each compartment within a region correspond to the flow of individuals through the Susceptible, Exposed, Infected, and Removed compartments. The temporary traveler rates \(W_{ab} = \sum \limits_{X \in \{S, E, I\}, Y \in \{u, v\}} w_{ab}^{(X,Y)} \) for \(a, b \in \{1, 2, 3\}\), representing the movement of susceptible, exposed and infectious individuals, are indicated by arrows between the circles.
}
    \label{fig:hybrid-SEIR-diagram}
\end{figure}
Building on the standard SEIR framework, we further stratify each compartment by vaccination status to account for heterogeneity in susceptibility, disease progression, and transmission potential. Specifically, for each disease stage \(X \in \{S, E, I, R\}\), we distinguish between vaccinated (\(X^v\)) and unvaccinated (\(X^u\)) individuals. This results in the following eight compartments:

\begin{table}[h!]
\centering
\begin{tabular}{ll}
\hline
\textbf{Compartment} & \textbf{Description} \\
\hline
\(S^u\) & Unvaccinated susceptible individuals who are fully at risk of infection \\
\(S^v\) & Vaccinated susceptible individuals  \\
 &  who are partially protected against infection \\
\(E^u\) & Unvaccinated exposed individuals who are infected but not yet infectious \\
\(E^v\) & Vaccinated exposed individuals in the incubation (latent) phase of infection \\
\(I^u\) & Unvaccinated infectious individuals capable of transmitting the disease \\
\(I^v\) & Vaccinated infectious individuals,  \\
 & typically with reduced transmissibility or shorter infectious periods \\
\(R^u\) & Unvaccinated individuals who have recovered or died \\
\(R^v\) & Vaccinated individuals who have recovered or died \\
\hline
\end{tabular}
\caption{Description of compartments in the SEIR model with vaccination stratification.}
\label{tab:vaccinated_seir}
\end{table}

To model interregional mobility, we define directional travel weights that are specific to both disease state and vaccination status. These weights capture the proportion of individuals in a given subpopulation who travel from one region to another per unit time. Specifically, the notation \(w_{ab}^{(X,Y)}\) denotes the travel rate from region \(a\) to region \(b\) for individuals in disease state \(X \in \{S, E, I\}\) and vaccination status \(Y \in \{u, v\}\), where \(u\) and \(v\) indicate unvaccinated and vaccinated, respectively.

The key travel weights used in the model are summarized in Table~\eqref{tab:travel_weights}.

\begin{table}[h!]
\centering
\begin{tabular}{lll}
\hline
\textbf{Notation} & \textbf{Description} & \textbf{Direction} \\
\hline
\(w_{ij}^{(S,u)}\) & Travel rate for unvaccinated susceptible individuals & From region \(i\) to region \(j\) \\
\(w_{ij}^{(S,v)}\) & Travel rate for vaccinated susceptible individuals   & From region \(i\) to region \(j\) \\
\(w_{ji}^{(E,u)}\) & Travel rate for unvaccinated exposed individuals & From region \(j\) to region \(i\) \\
\(w_{ji}^{(E,v)}\) & Travel rate for vaccinated exposed individuals   & From region \(j\) to region \(i\) \\
\(w_{ji}^{(I,u)}\) & Travel rate for unvaccinated infectious individuals & From region \(j\) to region \(i\) \\
\(w_{ji}^{(I,v)}\) & Travel rate for vaccinated infectious individuals   & From region \(j\) to region \(i\) \\
\hline
\end{tabular}
\caption{Directional travel weights by disease state and vaccination status.}
\label{tab:travel_weights}
\end{table}

The progression of individuals through epidemiological states is governed by a set of region- and status-specific parameters. These parameters control transitions between compartments based on contact rates, incubation periods, vaccination coverage, and recovery dynamics. In particular, we distinguish between vaccinated and unvaccinated individuals to account for differences in susceptibility, transmissibility, and disease progression.

Table~\eqref{tab:epi_parameters} summarizes the key epidemiological parameters used in the model, along with their interpretations.

\begin{table}[h!]
\centering
\begin{tabular}{ll}
\hline
\textbf{Parameter} & \textbf{Description} \\
\hline
\(\theta_i\) & Vaccination rate in region \(i\) \\
\(\beta_i^{uu}\) & Transmission rate from unvaccinated infectious  \\
 & to unvaccinated susceptible individuals in region \(i\) \\
\(\beta_i^{uv}\) & Transmission rate from vaccinated infectious \\
&  to unvaccinated susceptible individuals in region \(i\) \\
\(\beta_i^{vu}\) & Transmission rate from unvaccinated infectious \\
 & to vaccinated susceptible individuals in region \(i\) \\
\(\beta_i^{vv}\) & Transmission rate from vaccinated infectious \\
 &  to vaccinated susceptible individuals in region \(i\) \\
\(\sigma_i^u\) & Incubation rate for unvaccinated exposed individuals in region \(i\) \\
\(\sigma_i^v\) & Incubation rate for vaccinated exposed individuals in region \(i\) \\
\(\gamma_i^u\) & Recovery (or removal) rate for unvaccinated individuals in region \(i\) \\
\(\gamma_i^v\) & Recovery (or removal) rate for vaccinated individuals in region \(i\)\\
\hline
\end{tabular}
\caption{Epidemiological parameters defining transitions by disease and vaccination status.}
\label{tab:epi_parameters}
\end{table}

To incorporate vaccination into the spatial SEIR framework, we define the population in region \(i \in \{1, 2, \dots, N\}\) as \(X_i^Y(t)\), where \(X \in \{S, E, I, R\}\) denotes the disease stage—Susceptible, Exposed, Infectious, or Removed—and \(Y \in \{u, v\}\) indicates vaccination status, with \(u\) representing unvaccinated and \(v\) representing vaccinated individuals. The compartmental dynamics for each \(X_i^Y(t)\) are governed by the following system of differential equations: 
\begin{equation}
\begin{aligned} 
    \frac{d}{dt} S_i^u(t) 
    & = -\theta_i S_i^u(t)
    - S_i^u(t)w_{ii}^{S, u}\Big((1-u_i) \big(\beta_i^{uu} \sum \limits_{j \not = i} w_{ji}^{I,u}I_j^u(t) + \beta_i^{uv} \sum \limits_{j \not = i} w_{ji}^{I,v}I_j^v(t)\big)\Big) \\
    & \quad - S_i^u(t) \sum \limits_{j \not = i} w_{i,j}^{S,u} \big(\beta_{j}^{uu}w_{jj}^{I,u}I_j^u(t) + \beta_j^{uv}w_{jj}^{I,v}I_j^v(t)\big)\\
    & \quad -S_i^u(t) \Big(\sum \limits_{j \not =i} w_{ij}^{S,u} \Big(\beta_j^{uu} \sum\limits_{k \not =j}(1-u_j)w_{kj}^{I,u}I_k^u(t) + \beta_j^{uv}\sum\limits_{k \not =j}(1-u_j)w_{kj}^{I,v}I_k^v(t)\Big)\Big) \\
    & \quad - S_i^u(t)w_{ii}^{S,u}\big(\beta_i^{uu}w_{ii}^{I,u}I_i^u(t) + \beta_i^{uv}w_{ii}^{I,v}I_i^v(t)\big),\\
    \frac{d}{dt} S_i^v(t) & =\theta_i S_i^u(t)
    - S_i^v(t)w_{ii}^{S, v}\Big((1-u_i) \big(\beta_i^{vu} \sum \limits_{j \not = i} w_{ji}^{I,u}I_j^u(t) + \beta_i^{vv} \sum \limits_{j \not = i} w_{ji}^{I,v}I_j^v(t)\big)\Big) \\
    & \quad - S_i^v(t) \sum \limits_{j \not = i} w_{ij}^{S,v} \big(\beta_{j}^{vu}w_{jj}^{I,u}I_j^u(t) + \beta_j^{vv}w_{jj}^{I,v}I_j^v(t)\big)\\
    &  \quad -S_i^v(t) \Big(\sum \limits_{j \not =i} w_{i,j}^{S,v} \Big(\beta_j^{vu} \sum\limits_{k \not =j}(1-u_j)w_{kj}^{I,u}I_k^u(t) + \beta_j^{vv}\sum\limits_{k \not =j}(1-u_j)w_{kj}^{I,v}I_k^v(t)\Big)\Big) \\
    & \quad - S_i^v(t)w_{ii}^{S,v}\big(\beta_i^{vu}w_{ii}^{I,u}I_i^u(t) + \beta_i^{vv}w_{ii}^{I,v}I_i^v(t)\big),\\
     \frac{d}{dt} E_i^u(t) & = S_i^u(t)w_{ii}^{S, u}\Big((1-u_i) \big(\beta_i^{uu} \sum \limits_{j \not = i} w_{ji}^{I,u}I_j^u(t) + \beta_i^{uv} \sum \limits_{j \not = i} w_{ji}^{I,v}I_j^v(t)\big)\Big) \\
      & \quad + S_i^u(t) \sum \limits_{j \not = i} w_{i,j}^{S,u} \big(\beta_{j}^{uu}w_{jj}^{I,u}I_j^u(t) + \beta_j^{uv}w_{jj}^{I,v}I_j^v(t)\big)\\
    & \quad +S_i^u(t) \Big(\sum \limits_{j \not =i} w_{ij}^{S,u} \Big(\beta_j^{uu} \sum\limits_{k \not =j}(1-u_j)w_{kj}^{I,u}I_k^u(t) + \beta_j^{uv}\sum\limits_{k \not =j}(1-u_j)w_{kj}^{I,v}I_k^v(t)\Big)\Big) \\
     & \quad + S_i^u(t)w_{ii}^{S,u}\big(\beta_i^{uu}w_{ii}^{I,u}I_i^u(t) + \beta_i^{uv}w_{ii}^{I,v}I_i^v(t)\big) - \sigma_i^u E_i^u(t),\\
     \frac{d}{dt} E_i^v(t) & = S_i^v(t)w_{ii}^{S, v}\Big((1-u_i) \big(\beta_i^{vu} \sum \limits_{j \not = i} w_{ji}^{I,u}I_j^u(t) + \beta_i^{vv} \sum \limits_{j \not = i} w_{ji}^{I,v}I_j^v(t)\big)\Big) \\
      & \quad + S_i^v(t) \sum \limits_{j \not = i} w_{ij}^{S,v} \big(\beta_{j}^{vu}w_{jj}^{I,u}I_j^u(t) + \beta_j^{vv}w_{jj}^{I,v}I_j^v(t)\big)\\
    & \quad +S_i^v(t) \Big(\sum \limits_{j \not =i} w_{i,j}^{S,v} \Big(\beta_j^{vu} \sum\limits_{k \not =j}(1-u_j)w_{kj}^{I,u}I_k^u(t) + \beta_j^{vv}\sum\limits_{k \not =j}(1-u_j)w_{kj}^{I,v}I_k^v(t)\Big)\Big) \\
     & \quad + S_i^v(t)w_{ii}^{S,v}\big(\beta_i^{vu}w_{ii}^{I,u}I_i^u(t) + \beta_i^{vv}w_{ii}^{I,v}I_i^v(t)\big) - \sigma_i^v E_i^v(t),\\
    \frac{d}{dt} I_i^u(t) & =\sigma_i^u E_i^u(t) - \gamma_i^u I_i^u(t), \, 
     \frac{d}{dt} I_i^v(t)=\sigma_i^vE_i^v(t) - \gamma_i^v I_i^v(t), \\
     \frac{d}{dt} R_i^u(t)& =\gamma_i^u I_i^u(t), \, \frac{d}{dt} R_i^v(t)=\gamma_i^v I_i^v(t).
    \end{aligned}
    \label{eqn: SEIR_vacc}
\end{equation}

The dynamics in equation~\eqref{eqn: SEIR_vacc} extend those of the baseline model in equation~\eqref{eqn: SEIR} by explicitly stratifying each compartment by vaccination status and incorporating vaccination transitions, \(S_i^u \rightarrow S_i^v\), via the region-specific vaccination rate \(\theta_i\). A schematic illustration of the hybrid SEIR model incorporating vaccination within three regions is provided in Figure~\(\eqref{fig:hybrid-SEIR-diagram}\).

\section{Optimal Control Problems}\label{sec:optimal_control_problems}
While the full $N$-region SEIR framework with vaccination stratification (Section \ref{S2}) describes the real-world epidemiology, its high dimensionality prevents the analytical derivation of optimal control trajectories. To establish closed-form results, specifically the bang-bang structure of optimal controls via Pontryagin’s Maximum Principle, we restrict ourselves to the case of two regions. 
This assumption keeps a high level of generality of the analysis, since the second country can be regarded as a proxy for the rest of the world.
Moreover, the qualitative insights derived from the two-country model provide valuable information to implement and interpret numerical optimal control schemes on the expanded SEIR model.\\
Therefore, in this section, we investigate optimal testing strategies in a two-country setting. Our objective is to characterize the optimal testing policy for one country while assuming that the testing strategy of the other country is fixed.  We first formulate the optimal control problem over a general class of admissible (measurable) testing controls. We then restrict our focus to piecewise constant policies and characterize the corresponding optimal switching times.

\subsection{Analysis in the Measurable Control Setting}\label{sec:analysis_in_measurable_control_setting}
We study the dynamics governed by 
\eqref{eqn: SIR} 
with initial conditions 
\[
(S_i(0), I_i(0), R_i(0)) 
= (S_{i,0}, I_{i,0}, R_{i,0}) 
\in \mathbb{R}_{\geq 0}^3.
\]
Here, 
\[
u_i : [0,T] \to [0,1]
\]
denotes a measurable function representing the testing control in region 
\(i \in \{1, 2\}\), over a fixed, sufficiently large time horizon 
\(T > 0\). We assume that \(u_2\) is fixed and only \(u_1\) subject to optimization.
We aim to find a control $u_1$ that minimizes the cost functional:
\begin{align}\label{cost}
    \mathcal{J}(u_1)  = \int_0^T \big(u_1(t) c_{test} N_2(t) w_{21} + u_1(t) c_t  I_{2}(t) w_{21} \big)\,dt + c_d  d_{R} R_1(T)+  c_d d_{I} I_1(T)\,.
\end{align}
The running costs consist of two contributions:
\begin{itemize}
    \item \emph{Administrative testing cost.}  
    The parameter \(c_{\text{test}}\) represents the cost of administering a test to one incoming individual.  
    The term \(u_1(t)\, c_{\text{test}}\, N_2(t)\, w_{21}\) corresponds to the number of incoming individuals from region~2 (as encoded by the mobility weight \(w_{21}\)) who are tested at time \(t\).
    
    \item \emph{Cost of rejecting tourists.}  
    The parameter \(c_t\) denotes the economic loss associated with rejecting one infected tourist.  
    The term \(u_1(t)\, c_t\, I_2(t)\, w_{21}\) captures the expected number of rejected infected individuals from region~2 at time \(t\).
\end{itemize}

The terminal cost accounts for health outcomes in region~1: \(c_d\) is the cost associated with one death, \(d_R\) and \(d_I\) represent, respectively, the death rates within the Recovered and Infected compartments. Thus the terms \(c_d d_R R_1(T)\) and \(c_d d_I I_1(T)\) quantify the expected mortality cost at the end of the time horizon.\\

In the following, we use the notation 
\begin{align*}
    \lambda:[0&,T]\to\mathbb{R}^6,\\
    t&\mapsto \big(\lambda_1^1(t),\lambda_1^2(t),\lambda_1^3(t),\lambda_2^1(t),\lambda_2^2(t),\lambda_2^3(t)\big),
    \end{align*}
to indicate the adjoint variable for the optimization problem. In this setting the Hamiltonian is given by
\begin{align*}
    H(&{\bf x}(t), { u_1} (t),  {\bf\lambda} (t), t) = \\
     &\quad = \lambda_1^1\Big(- S_1(t) w^{S}_{11}
    \beta_1 {(1-u_1)}
    w^{I}_{21} I_{2}(t) - S_1(t)   w_{12}^{S} 
    \beta_2w^{I}_{22}I_2(t) -S_1(t)w_{11}^S \beta_1 w_{11}^I I_1(t)\label{1}\Big) \\
    & \quad + \lambda_1^2\Big(S_1(t) w^{S}_{11}
    \beta_1 {(1-u_1)}
    w^{I}_{21} I_{2}(t) + S_1(t)   w_{12}^{S} 
    \beta_2w^{I}_{22}I_2(t)+S_1(t)w_{11}^S \beta_1 w_{11}^I I_1(t)  -\gamma_1 I_1(t)\Big)\\
    & \quad + \lambda_1^3 \gamma_1 I_1(t)\\
    & \quad + \lambda_2^1\Big(- S_2(t) w^{S}_{22}
    \beta_2 {(1-u_2)}
    w^{I}_{12} I_{1}(t) - S_2(t)   w_{21}^{S} 
    \beta_1w^{I}_{11}I_1(t)-S_2(t)w_{22}^S \beta_2 w_{22}^I I_2(t)\Big) \\
    & \quad + \lambda_2^2\Big(S_2(t) w^{S}_{22}
    \beta_2 {(1-u_2)}
    w^{I}_{12} I_{1}(t) + S_2(t)   w_{21}^{S} 
    \beta_1w^{I}_{11}I_1(t)+S_2(t)w_{22}^S \beta_2 w_{22}^I I_2(t)-\gamma_2 I_2(t)\Big) \\
    & \quad + \lambda_2^3 \gamma_2 I_2(t) + u_1 c_{test} N_2 w_{21} + u_1 c_t  I_{2}(t) w_{21}.
    \end{align*}
    By applying the Pontryagin Maximum principle, we know that an optimal control satisfies
    $$u_1^*=\text{ argmin}_{u_1(t)\in[0,1]} H({\bf x}(t), { u_1} (t),  {\bf\lambda} (t), t).$$
Since the Hamiltonian is linear in the control variable \(u_1\), the minimizer can be determined by analyzing the sign of the switching function. In this context, we denote the switching function by $\phi(t)$, and it is given by
\begin{align*}
  \phi(t)=  (\lambda_1^1 - \lambda_1^2)S_1(t)w_{11}^S\beta_1w_{21}^II_2(t)
    + c_{test} N_{2,0} w_{21} +  c_t  I_{2}(t) w_{21}\,,
\end{align*}
where
\[N_{2,0}=S_{2,0}+ I_{2,0} + R_{2,0}=S_2(t)+I_2(t)+R_2(t)=N_2(t) .\]

The adjoint variables $\lambda_i$ satisfy the following system of differential equations:

\begin{align*}
    &\dot{\lambda}_1^1=(\lambda_1^1-\lambda_1^2)\Big(  w^{S}_{11}
    \beta_1 {(1-u_1)}
    w^{I}_{21}I_2(t)  +  w_{12}^{S} 
    \beta_2w^{I}_{22}I_2(t)+w_{11}^S \beta_1 w_{11}^I I_1(t)\Big) \\
    &\dot{\lambda}_1^2=(\lambda_1^1-\lambda_1^2)S_1(t)w_{11}^S \beta_1 w_{11}^I+(\lambda_1^2-\lambda_1^3 )\gamma_1 
   + (\lambda_2^1-\lambda_2^2)S_2(t)\Big( w^{S}_{21}
    \beta_1 
    w^{I}_{11} + {(1-u_2)} w_{22}^{S} 
    \beta_2w^{I}_{12}\Big)\\
     &\dot{\lambda}_1^3=0\\
     &\dot{\lambda}_2^1=(\lambda_2^1-\lambda_2^2)\Big(  w^{S}_{22}
    \beta_2 {(1-u_2)}
    w^{I}_{12}I_{1}(t)  +  w_{21}^{S} 
    \beta_1w^{I}_{11}I_{1}(t)+w_{22}^S \beta_2 w_{22}^I I_2(t)\Big) -u_1 c_{test}w_{21}\\
    &\dot{\lambda}_2^2=(\lambda_2^1-\lambda_2^2)S_2(t)w_{22}^S \beta_2 w_{22}^I+(\lambda_2^2-\lambda_2^3)\gamma_2 
   + (\lambda_1^1-\lambda_1^2)S_1(t)\Big( w^{S}_{11}
    \beta_1 {(1-u_1)}
    w^{I}_{21}+    w_{12}^{S} 
    \beta_2w^{I}_{22}\Big)\\ &\qquad \qquad-u_1 c_t   w_{21}-u_1 c_{test}w_{21}\\
        &\dot{\lambda}_2^3=-u_1 c_{test}w_{21}\\
    \end{align*}
 with final data
  $$\begin{cases}\lambda_1^1(T)=0,\\
    \lambda_1^2(T)=c_d d_I,\\
    \lambda_1^3(T)=c_d d_R,\\
    \lambda_2^1(T)=0,\\
    \lambda_2^2(T)=0,\\
    \lambda_2^3(T)=0.\\\end{cases} $$
It is immediate to check that 
$$\lambda_1^3(t)=c_d d_R, \qquad \lambda_2^3(t)=\int_t^Tu_1(s) c_{test}w_{21}ds. $$

However, the remaining adjoint variables are strongly interdependent, making explicit integration in closed form infeasible. Therefore, we analyze the sign of the switching function $\phi(t)$ at the final time $T$ by examining the order of magnitude of the variables involved. We rewrite $\phi(t)$ as follows:
\[
\phi(t) = A(t) + B(t), \qquad \text{ where } \quad \begin{cases}
A(t) = (\lambda_1^1 - \lambda_1^2)\, S_1(t)\, w_{11}^S\, \beta_1\, w_{21}^I\, I_2(t), \\
B(t) = c_{\text{test}}\, N_{2,0}\, w_{21} + c_t\, I_2(t)\, w_{21}.
\end{cases}
\]

We assume the following typical magnitudes for the variables:
\begin{itemize}
  \item $S_1(t), S_2(t) \sim 10^6$--$10^7$ (population sizes);
  \item $I_1(t), I_2(t) \sim 10^4$--$10^5$ (infected individuals);
  \item $\beta_1 \sim 10^0$ (usually between 1.5 and 2);
  \item $w_{11}^S \sim 10^0$;
  $w_{21}^I \sim 10^{-3}$;
  \item $c_{\text{test}}, c_t \sim 10^2$;
  {$c_{\text{death}} \sim 10^{6}$};
  {$d_r \sim 10^{-3}$};
  {$d_I \sim 10^{-3}$};
  \item $N_{2,0} \sim 10^7$--$10^8$;
  \item $w_{21} \sim 10^{-1}$--$10^0$ (depending on the travel intensity between the two countries).
\end{itemize}

With these estimates, $\phi(T)$ can be expressed as
\begin{align*}
  \phi(T) & =  (\lambda_1^1 - \lambda_1^2)\,S_1(T)\,w_{11}^S\,\beta_1\,w_{21}^I\,I_2(T)
    + c_{\text{test}}\,N_{2,0}\,w_{21} +  c_t\,I_{2}(T)\,w_{21},\\
    & \approx  - c_d d_I\,S_1(T)\,w_{11}^S\,\beta_1\,w_{21}^I\,I_2(T)
    + c_{\text{test}}\,N_{2,0}\,w_{21} +  c_t\,I_{2}(T)\,w_{21}.
\end{align*}

We observe that $A(T)$ remains dominant unless the number of infected individuals in country~2 is very low. This implies that it is optimal to continue testing until the pandemic is nearly resolved in country~2.

\medskip
\begin{remark}
We briefly comment on the possibility that the switching function $\phi(t)$ may vanish identically on a nontrivial time interval, i.e., $\phi(t) = 0$ for all $t \in [t_1, t_2]$ with $t_2 > t_1$. In this case, the control is said to be \emph{singular} over $[t_1, t_2]$. 

To determine whether such singular trajectories can occur, one must analyze higher-order derivatives of $\phi(t)$ with respect to time and verify whether the control appears explicitly in any of them. If the control does not appear in any finite-order derivative, the trajectory cannot be singular. 

In our setting, given the complex and nonlinear structure of $\phi(t)$ and the strong influence of state and adjoint variables, singular arcs are theoretically possible but unlikely to occur in practice unless the adjoint variables satisfy highly specific relations. Moreover, the positivity and dominance of the constant terms in $\phi(t)$ (notably $c_{\text{test}} N_{2,0} w_{21}$) make it improbable that $\phi(t)$ remains exactly zero over a nontrivial interval. Thus, while singular controls are a theoretical possibility, they are expected to be rarely observed in simulations of optimal trajectories.
\end{remark}

\subsection{Reduction to Piecewise Constant Controls}
\label{subsection: piecewise_controls}
The complexity of the switching function suggests that singular trajectories may rarely occur. For this reason we restrict the class of admissible control to the set of piecewise constant functions taking values in $\lbrace 0,1\rbrace$. Specifically, we will address the following three problems:
\begin{itemize}
    \item[i)] Given a scenario where no testing is implemented, what is the optimal time to start testing at the border?
    \item[ii)] In a scenario where testing is active, what is the optimal time to suspend it?
    \item[iii)] What are the two optimal times for activating and deactivating testing at the border?
\end{itemize}
\subsubsection{Case 1: Optimal Timing for Initiating Border Testing}
\label{Case_1}

In this case, we address Problem (i): determining the optimal time for country 1 to activate testing at the border. Specifically, we consider a piecewise-constant control $u_1(t)$ that switches from 0 to 1 at some time $t^*\in(0,T)$, representing the activation of the policy:

\begin{equation}\label{C1}
\tilde{u}_1(t) =
\begin{cases}
0, & \text{if } t \in [0, t^*),\\
1, & \text{if } t \in [t^*, T].
\end{cases}
\end{equation}
Our goal is to determine the optimal switching time $t^*$ that minimizes the cost functional \eqref{cost}.
We now present a characterization of the optimal switching time:

\begin{theorem}
The optimal switching time $t^*\in(0,T)$ for a control of the form \eqref{C1} satisfies the following implicit equation:
\begin{align}\label{OST1}
S_1(t^*) = \frac{ c_{test} N_2 w_{21} + I_2(t^*) c_t w_{21} }
{c_dd_I w_{11}^S \beta_1 w_{21}^I I_2(t^*) + c_dd_R\gamma_1 w_{11}^S \beta_1 w_{21}^I I_2(t^*)(T - t^*) }.
\end{align}
\end{theorem}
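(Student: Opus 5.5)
The plan is to treat $\mathcal J$ as a function of the single parameter $t^*$, $\mathcal J(t^*)=\mathcal J(\tilde u_1)$, and impose the first-order condition $\frac{d}{dt^*}\mathcal J(t^*)=0$. Two effects have to be computed and then balanced. The first is the direct change in the running cost from moving the switching time. The second is the indirect change in the terminal cost $c_d d_R R_1(T)+c_d d_I I_1(T)$ caused by shifting the infection flux into country~1.

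For the direct part, the running cost is $\int_{t^*}^T\big(c_{test}N_2w_{21}+c_tI_2(t)w_{21}\big)\,dt$. Its derivative with respect to $t^*$ is $-\big(c_{test}N_2w_{21}+c_tI_2(t^*)w_{21}\big)$, using $N_2(t)\equiv N_{2,0}$. To first order we treat $I_2$ as unaffected by the switch. This is consistent with the order-of-magnitude analysis of Section~\ref{sec:analysis_in_measurable_control_setting}, since $u_1$ enters the dynamics of country~2 only through the small travel coupling.

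For the indirect part, delaying the switch by $\delta$ keeps the imported-infection term $S_1w_{11}^S\beta_1w_{21}^I I_2$ active on $[t^*,t^*+\delta]$. This adds approximately $\delta\, w_{11}^S\beta_1w_{21}^I S_1(t^*)I_2(t^*)$ new infections in country~1. I will propagate this perturbation with the adjoint of Section~\ref{sec:analysis_in_measurable_control_setting}: the marginal change of the terminal cost is $(\lambda_1^2-\lambda_1^1)(t^*)\,w_{11}^S\beta_1w_{21}^IS_1(t^*)I_2(t^*)$, which is exactly the $-A(t^*)$ term of the switching function. The condition $\phi(t^*)=0$ then gives the result, provided $\lambda_1^2-\lambda_1^1$ can be evaluated explicitly.

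Evaluating $\lambda_1^2-\lambda_1^1$ is the main obstacle, since the adjoint system is coupled and not integrable in closed form. I will make the same simplification the paper uses at final time, applied to the cost-relevant contributions only. An extra infected individual created at $t^*$ contributes $c_dd_I$ through $I_1(T)$. It also contributes $c_dd_R\gamma_1$ per unit time through $\dot R_1=\gamma_1I_1$ until $T$, which yields $c_dd_R\gamma_1(T-t^*)$ when its presence in $I_1$ is taken as persistent to first order. Secondary infections, which act through $\lambda_1^1$ and the cross terms $\lambda_2^1-\lambda_2^2$, are neglected. Concretely, I replace $\lambda_1^1$ by its terminal value $0$ and integrate $\dot\lambda_1^2\approx(\lambda_1^2-\lambda_1^3)\gamma_1$ crudely, with $\lambda_1^3=c_dd_R$, to get $\lambda_1^2(t^*)\approx c_dd_I+c_dd_R\gamma_1(T-t^*)$.

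Substituting this into the first-order condition gives
\[
S_1(t^*)\,w_{11}^S\beta_1w_{21}^II_2(t^*)\big(c_dd_I+c_dd_R\gamma_1(T-t^*)\big)=c_{test}N_2w_{21}+c_tI_2(t^*)w_{21},
\]
and solving for $S_1(t^*)$ yields \eqref{OST1}. The approximations involved should be stated as part of the argument. The main one is the first-order linearization that neglects secondary transmission of the marginal infection.
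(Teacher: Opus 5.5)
Your proposal is correct at the same level of rigor as the paper's proof, but it takes a genuinely different route. The paper never uses the adjoint. It writes $I_1(T)$ and $R_1(T)$ as integrals split at $t^*$ and differentiates only through the integration limits. It keeps the jump $\dot I_1(t^{*-})-\dot I_1(t^{*+})=S_1(t^*)w_{11}^S\beta_1w_{21}^II_2(t^*)$ and silently discards the terms $\int_{t^*}^T\partial_{t^*}(\cdot)\,dt$ that come from the dependence of the post-switch trajectory on $t^*$. That is elementary, but it presents a frozen-trajectory approximation as an exact computation.

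You instead start from the exact identity $\mathcal J'(t^*)=-\phi(t^*)$: the derivative of the cost with respect to a switching time is the jump of the Hamiltonian. The whole approximation is then concentrated in the single quantity $(\lambda_1^2-\lambda_1^1)(t^*)$. This buys transparency in three ways. The theorem is precisely $\phi(t^*)=0$ with $\lambda_1^2-\lambda_1^1$ replaced by $c_dd_I+c_dd_R\gamma_1(T-t^*)$. It explains why Cases~1--3 produce the same equation, since the condition does not depend on the direction of the switch. And it shows how to correct the formula by using the numerically computed adjoint.

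One step should be fixed. If you actually integrate your reduced equation $\dot\lambda_1^2=(\lambda_1^2-c_dd_R)\gamma_1$ with $\lambda_1^2(T)=c_dd_I$, you get $\lambda_1^2(t^*)=c_dd_R+(c_dd_I-c_dd_R)e^{-\gamma_1(T-t^*)}$. That does not give the stated denominator. Your expression requires also dropping the $\gamma_1\lambda_1^2$ term. This amounts to assuming the marginal infected individual stays in $I_1$ until $T$ while also feeding $R_1$ at rate $\gamma_1$, so it is counted in both $I_1(T)$ and $R_1(T)$.

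That is exactly the paper's hidden assumption. It is not a consistent first-order linearization: it already disagrees with the expansion $c_dd_I+(c_dd_R-c_dd_I)\gamma_1(T-t^*)$ at order $\gamma_1(T-t^*)$ unless $d_I\ll d_R$. You should state it as an explicit ansatz rather than justify it as "first order."
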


\begin{proof}
   Restricting to a control of the form \eqref{C1}, the functional becomes
$$ J(t^{*}) = \int_{t^*}^T( c_{test} N_2 w_{21}+  c_t  I_{2} w_{21})\, dt + c_d  d_{R} R_1(T) + c_d  d_{I} I_1(T) .$$
Differentiating \(J\) with respect to \(t^{*}\), we obtain 
\begin{align}\label{DJ}
     J'(t^{*}) =&  - c_{test} N_2 w_{21}-  c_t  I_{2}(t^{*}) w_{21} + c_d d_I \tfrac{dI_1(T)}{d t^{*}} + c_dd_R \tfrac{dR_1(T)}{d t^{*}}\notag\\
     = &  - c_{test} N_2 w_{21}-  c_t  I_{2}(t^{*}) w_{21} +c_dd_IS_1(t^*)w_{11}^S\beta_1w_{21}^I I_2(t^*)+ c_dd_R\gamma_1 S_1(t^{*}) w^{S}_{11}
    \beta_1 
    w^{I}_{21} I_{2}(t^{*})(T - t^{*})\notag\\
     = &  - c_{test} N_2 w_{21}-  c_t  I_{2}(t^{*}) w_{21} +c_dS_1(t^*)\Big(d_Iw_{11}^S\beta_1w_{21}^I I_2(t^*)+ d_R\gamma_1  w^{S}_{11}
    \beta_1 
    w^{I}_{21} I_{2}(t^{*})(T - t^{*})\Big).
\end{align}
Indeed even if apparently the terminal cost does not depend on $t^*$ we have
\begin{align*}I_1(T)&= I_1(t^*)+\int_{t^*}^T \frac{d I_1}{dt}\, dt \\
&= I_1(0)+\int_{0}^{t^*} \frac{d I_1}{dt} \, dt +\int_{t^*}^T \frac{d I_1}{dt}\, dt\\
& = I_1(0) + \int_0^{t^{*}}\big( S_1(t) w^{S}_{11}
    \beta_1 
    w^{I}_{21} I_{2}(t) + S_1(t)w_{12}^{S} 
\beta_2w^{I}_{22}I_2(t)+S_1(t)w_{11}^S \beta_1 w_{11}^I I_1(t)  -\gamma_1 I_1(t) \big) \, dt \\
& \quad + \int_{t^{*}}^T \big(S_1(t)w_{12}^{S} 
\beta_2w^{I}_{22}I_2(t)+S_1(t)w_{11}^S \beta_1 w_{11}^I I_1(t)  -\gamma_1 I_1(t) \big)dt 
\end{align*}

from which
\begin{align}\label{DI1}
    \tfrac{dI_1(T)}{d t^{*}} = S_1(t^{*}) w^{S}_{11}
    \beta_1 
    w^{I}_{21} I_{2}(t^{*}).
\end{align}

Analogously for $R_1(T)$ we get
\begin{align*}
R_1(T)&=R_1(t^*)+\int_{t^*}^T \gamma_1 I_1(t)\, dt\\
        &=R_1(t^*)+\int_{t^*}^T \gamma_1 \Big( I_1(t^*)+\int_{t^*}^t \frac{d I_1}{ds}\, ds\Big)\,dt\\
        &=R_1(t^*)+\int_{t^*}^T \gamma_1 \Big( I_1(0)+\int_{0}^{t^*} \frac{d I_1}{ds}\, ds+\int_{t^*}^t \frac{d I_1}{ds}\, ds\Big)\,dt\\
        &=R_1(0)+\int_{0}^{t^*} \gamma_1 I_1(t)\,dt\,+\int_{t^*}^T \gamma_1 \Big( I_1(0)+\int_{0}^{t^*} \frac{d I_1}{ds}\, ds+\int_{t^*}^t \frac{d I_1}{ds}\, ds\Big)\,dt.
\end{align*}

hence, differentiating with respect to \(t^{*}\), 
\begin{align}
    \tfrac{dR_1(T)}{dt^{*}} & =  \gamma_1 I_1(t^{*}) -\gamma_1 \Big( I_1(0)+\int_{0}^{t^*} \frac{d I_1}{ds}\, ds+\int_{t^*}^{t^{*}} \frac{d I_1}{ds}\, ds\Big) + \int_{t^*}^T \gamma_1 \Big(\frac{d I_1(t^{*}-)}{ds} - \frac{d I_1(t^{*}+)}{ds}\Big)\,dt \notag\\
    & =  \gamma_1 I_1(t^{*}) -\gamma_1 \Big( I_1(0)+\int_{0}^{t^*} \frac{d I_1}{ds}\, ds\Big) + \gamma_1 S_1(t^{*}) w^{S}_{11}
    \beta_1 
    w^{I}_{21} I_{2}(t^{*})(T - t^{*})\notag\\
     & =  \gamma_1 I_1(t^{*}) -\gamma_1 \Big( I_1(0)+I_1(t^{*}) - I_1(0)\Big) + \gamma_1 S_1(t^{*}) w^{S}_{11}
    \beta_1 
    w^{I}_{21} I_{2}(t^{*})(T - t^{*})\notag\\
     & = \gamma_1 S_1(t^{*}) w^{S}_{11}
    \beta_1 
    w^{I}_{21} I_{2}(t^{*})(T - t^{*}).\label{DR1}
\end{align}

By setting the derivative \eqref{DJ} equal to zero, we obtain that a critical time $t^*$ satisfies
\begin{align}
    S_1(t^*) & = \frac{ c_{test} N_2 w_{21} + I_2(t^*) c_t w_{21}}
{c_dd_I w_{11}^S \beta_1 w_{21}^I I_2(t^*) + c_dd_R\gamma_1 w_{11}^S \beta_1 w_{21}^I I_2(t^*)(T - t^*) }.
\label{critical_time_1}
\end{align} 
\end{proof}

\begin{remark}\label{remark case 1} Reading \eqref{critical_time_1} more carefully, we can highlight several interpretative aspects:

\begin{itemize}
    \item {\bf Interpretation of the critical time \(t^{*}\).}
    The numerator of \eqref{critical_time_1} represents the total cost associated with testing and tourism rejection, consisting of two components. The first term, \( w_{21} c_{test} N_2 \), represents the cost that region 1 for testing incoming individuals from region 2, while the second term, \(w_{21} c_t I_2(t^*)\), shows the cost to region 1 due to the rejection of infected tourists from region 2.
Similarly, the denominator of \eqref{critical_time_1} represents the total cost associated with deaths, including contributions from both infected and recovered individuals. The term \(c_dd_I w_{11}^S \beta_1 w_{21}^I I_2(t^*)\) captures the cost in region 1 induced by infections imported from region 2 at time \(t^{*}\), whereas, \(c_dd_R\gamma_1 w_{11}^S \beta_1 w_{21}^I I_2(t^*)(T - t^*)\) is the recovery-phase cost to region 1 over the remaining time horizon generated by infection pressure from region 2 at time \(t^{*}\). 
Consequently, at the critical time \( t^{*} \), the susceptible population in region 1 is determined by the ratio between the total cost associated with testing and tourism rejection and the total cost related to death. A higher cost due to testing and tourism rejection, together with a lower cost associated with death, will result in a larger susceptible population in region 1 at the critical time \( t^{*} \).

\item {\bf Relation between \(S_1(t^{*})\) and \(I_2(t^{*})\).}
Notice that \eqref{critical_time_1} can be rewritten as
\begin{align*}
    S_1(t^{*}) 
    = \frac{ c_{\text{test}} w_{21} \tfrac{N_2}{I_2(t^{*})} + c_t w_{21} }
    { c_{dd_I} w_{11}^S \beta_1 w_{21}^I + c_{dd_R} \gamma_1 w_{11}^S \beta_1 w_{21}^I (T - t^{*}) },
\end{align*}
which indicates that an increase in the infected population in region 2 leads to a decrease in the susceptible population in region 1 at the critical time \( t^{*} \). Intuitively, a larger \(I_2(t^{*})\) means a stronger external infection pressure and higher risk of imported infections, leading to a smaller remaining susceptible portion in region 1 at the activation threshold. Conversely, if \(I_2(t^{*})\) is small, the fraction \(\tfrac{N_2}{I_2(t^{*})}\) is large, so \(S_1(t^{*})\) is higher, which means that region 1 holds a larger susceptible population at the time \(t^{*}\). 

\item {\bf Relation between \(S_1(t^{*})\) and \(T\).}
A longer time horizon \(T\), meaning that the testing control window extends further into the future, results in a smaller susceptible population \(S_1(t^{*})\) at the critical time. In other words, when planning over a longer period, region 1 must reach the critical activation time at a lower susceptible level, since the potential recovery-related costs accumulated over the extended horizon are larger.

\item {\bf Existence of the critical activation time.}
Let 
\[
F(t) =  
\frac{ c_{\text{test}} N_2 w_{21} + I_2(t)\, c_t w_{21} }
{ c_{dd_I} w_{11}^S \beta_1 w_{21}^I I_2(t) 
+ c_{dd_R} \gamma_1 w_{11}^S \beta_1 w_{21}^I I_2(t) (T - t) }, t \in [0, T]. 
\]
The function \( F \colon [0, T] \to \mathbb{R}_{>0} \) is continuous, and the susceptible population in region 1, \( S_1(t) \), is also continuous and monotonically decreasing in time.  
If \( S_1(0) > F(0) \) and \( S_1(T) < F(T) \), then there exists some \( t^{*} \in (0, T) \) such that
\[
S_1(t^{*}) = F(t^{*}).
\]
\end{itemize}

\end{remark}

\subsubsection{Case 2: Optimal Timing for Suspending Border Testing}
We now address problem ii), considering a piecewise constant control $u_1$ with a switching time \(t^{*} \in (0, T)\) of the form 
\begin{equation}\label{C2}
    \bar{u}_1(t) =\begin{cases} 
    1 \text{ if } t \in [0, t^{*})\\
    0\text{ if } t \in [t^{*}, T]
    \end{cases}
    \end{equation}
Our goal is to determine the optimal switching time \(t^*\in (0, T)\) that minimizes \eqref{cost}.
\begin{theorem}
The optimal switching time $t^*\in(0,T)$ for a control of the form \eqref{C2} satisfies the following implicit equation:
\begin{align}\label{OST1}
S_1(t^*) = \frac{ c_{test} N_2 w_{21} + I_2(t^*) c_t w_{21} }
{c_dd_I w_{11}^S \beta_1 w_{21}^I I_2(t^*) + c_dd_R\gamma_1 w_{11}^S \beta_1 w_{21}^I I_2(t^*)(T - t^*) }.
\end{align}
\end{theorem}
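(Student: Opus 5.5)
We mirror the argument used for the control \eqref{C1}, with the roles of the two phases exchanged. For a control of the form \eqref{C2}, the running cost is supported on $[0,t^*)$. The cost functional therefore reduces to a function of the single real variable $t^*$:
\[
J(t^*)=\int_0^{t^*}\big(c_{test}N_2w_{21}+c_tI_2(t)w_{21}\big)\,dt+c_dd_RR_1(T)+c_dd_II_1(T).
\]
We use that $N_2$ is constant, since $N_2(t)=N_{2,0}$.

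We treat $I_2$ as independent of $t^*$ to first order, exactly as in the proof for \eqref{C1}. This is the same approximation implicitly made there: $u_1$ enters the equations of region 2 only through $\lambda$, and not through $I_2$ in the reduced two-region dynamics used. Under this approximation, differentiating the running cost gives $c_{test}N_2w_{21}+c_tI_2(t^*)w_{21}$. The sign is now $+$ rather than $-$, because enlarging $t^*$ extends the testing window.

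For the terminal terms, we split $I_1(T)$ as
\[
I_1(T)=I_1(0)+\int_0^{t^*}\dot I_1\,dt+\int_{t^*}^T\dot I_1\,dt .
\]
On $[0,t^*)$ the import term $S_1w_{11}^S\beta_1w_{21}^II_2$ is absent, since $u_1=1$. On $[t^*,T]$ it is present, since $u_1=0$. The jump of $\dot I_1$ across $t^*$ then yields
\[
\frac{dI_1(T)}{dt^*}=-S_1(t^*)w_{11}^S\beta_1w_{21}^II_2(t^*).
\]
This is the negative of \eqref{DI1}. Likewise, writing $R_1(T)=R_1(0)+\int_0^{t^*}\gamma_1I_1\,dt+\int_{t^*}^T\gamma_1\big(I_1(t^*)+\int_{t^*}^t\dot I_1\,ds\big)dt$ and differentiating, the boundary contributions $\gamma_1I_1(t^*)$ cancel as in \eqref{DR1}. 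Only the jump term integrated over $[t^*,T]$ survives, giving
\[
\frac{dR_1(T)}{dt^*}=-\gamma_1S_1(t^*)w_{11}^S\beta_1w_{21}^II_2(t^*)(T-t^*).
\]

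Collecting terms, we obtain
\[
J'(t^*)=c_{test}N_2w_{21}+c_tI_2(t^*)w_{21}-c_dS_1(t^*)w_{11}^S\beta_1w_{21}^II_2(t^*)\big(d_I+d_R\gamma_1(T-t^*)\big).
\]
This is exactly $-1$ times \eqref{DJ}. The first-order condition $J'(t^*)=0$ at an interior optimum $t^*\in(0,T)$ therefore gives the same implicit equation \eqref{OST1}, after dividing by the (positive) denominator.

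The main obstacle is the justification of the differentiation step, namely the claim that the only $t^*$-dependence of $I_1(T)$ and $R_1(T)$ is through the jump of the vector field at $t^*$. Strictly, $S_1$ and $I_1$ on $[t^*,T]$ also depend on $t^*$, and $I_2$ depends on $I_1$ through cross-infection. A rigorous version would use the sensitivity (variational) equation: $\partial x(T)/\partial t^*=\Phi(T,t^*)\big(f_0-f_1\big)(x(t^*))$, with $\Phi$ the state transition matrix of the linearized system. We adopt the paper's approximation of replacing $\Phi$ by its leading-order action on the $I_1$ and $R_1$ components, which yields the factors $1$ and $\gamma_1(T-t^*)$. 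We flag this explicitly, as in the proof for \eqref{C1}, rather than attempt a full sensitivity analysis.
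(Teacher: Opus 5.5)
Your proposal is correct at the paper's level of rigor and follows the paper's proof essentially line by line. It uses the same reduced functional, the same jump computations giving $dI_1(T)/dt^*=-S_1(t^*)w_{11}^S\beta_1w_{21}^II_2(t^*)$ and $dR_1(T)/dt^*=-\gamma_1S_1(t^*)w_{11}^S\beta_1w_{21}^II_2(t^*)(T-t^*)$, the same frozen-trajectory approximation (which you state explicitly and the paper does not), and the same first-order condition $J'(t^*)=0$.

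One correction: the derivative of the running cost $\int_0^{t^*}(\cdots)\,dt$ is exact here, not an approximation. On $[0,t^*)$ the state is the $u_1\equiv 1$ trajectory, which by causality does not depend on $t^*$. Your justification that ``$u_1$ enters the equations of region 2 only through $\lambda$'' is therefore unnecessary, and it is not the right reason anyway, since $I_2$ does depend on $u_1$ through $I_1$.
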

\begin{proof}
Restricting to a control of the form \eqref{C2}, the functional becomes:
$$ J(t^{*}) = \int_0^{t^*}(c_{test} N_2 w_{21}+ c_t  I_{2} w_{21})\, dt + c_d  d_{R} R_1(T) + c_d  d_{I} I_1(T) .$$
Differentiating \(J\) with respect to \(t^{*}\), we obtain 
\begin{align}
     J'(t^{*}) =&  c_{test} N_2 w_{21}+ c_t  I_{2}(t^{*}) w_{21} + c_d d_I \tfrac{d I_1(T)}{d t^{*}} + c_d d_R \tfrac{d R_1(T)}{d t^{*}} \notag\\
     = &   c_{test} N_2 w_{21}+ c_t  I_{2}(t^{*}) w_{21} -c_d d_IS_1(t^*)w_{11}^S\beta_1w_{21}^I I_2(t^*)- c_d d_R\gamma_1 S_1(t^{*}) w^{S}_{11}
    \beta_1 
    w^{I}_{21} I_{2}(t^{*})(T - t^{*}).\label{DJ2}
\end{align}
Indeed, both $I_1(T)$ and $R_1(T)$ in the terminal cost depend on $t^*$. Looking first at  $I_1(T)$ we have
\begin{align*}I_1(T)&= I_1(t^*)+\int_{t^*}^T \frac{d I_1}{dt}\, dt \\
&= I_1(0)+\int_{0}^{t^*} \frac{d I_1}{dt} \, dt+\int_{t^*}^T \frac{d I_1}{dt}\, dt\\
& = I_1(0) + \int_0^{t^{*}}\Big( S_1(t)   w_{12}^{S} 
    \beta_2w^{I}_{22}I_2(t) { + S_1(t)w_{11}^S \beta_1 w_{11}^I I_1(t)} -\gamma_1 I_1(t)\Big) dt \\
& \quad + \int_{t^{*}}^T \big(S_1(t) w^{S}_{11}
    \beta_1 
    w^{I}_{21} I_{2}(t)+S_1(t)   w_{12}^{S} 
    \beta_2w^{I}_{22}I_2(t) { + S_1(t)w_{11}^S \beta_1 w_{11}^I I_1(t)} -\gamma_1 I_1(t)\big)dt 
\end{align*}
from which
\begin{align*}
    \tfrac{dI_1(T)}{d t^{*}} = - S_1(t^{*}) w^{S}_{11}
    \beta_1 
    w^{I}_{21} I_{2}(t^{*}).
\end{align*}
For $R_1(T)$ we have
\begin{align*}
R_1(T)&=R_1(t^*)+\int_{t^*}^T \gamma_1 I_1(t)\, dt\\
        &=R_1(t^*)+\int_{t^*}^T \gamma_1 \Big( I_1(t^*)+\int_{t^*}^t \frac{d I_1}{ds}\, ds\Big)\,dt\\
        &=R_1(t^*)+\int_{t^*}^T \gamma_1 \Big( I_1(0)+\int_{0}^{t^*} \frac{d I_1}{ds}\, ds+\int_{t^*}^t \frac{d I_1}{ds}\, ds\Big)\,dt\\
        &=R_1(0)+\int_{0}^{t^*} \gamma_1 I_1(t)\,dt\,+\int_{t^*}^T \gamma_1 \Big( I_1(0)+\int_{0}^{t^*} \frac{d I_1}{ds}\, ds+\int_{t^*}^t \frac{d I_1}{ds}\, ds\Big)\,dt,
\end{align*}
whose derivative with respect to \(t^{*}\) is
\begin{align*}
    \tfrac{dR_1(T)}{dt^{*}} & =  \gamma_1 I_1(t^{*}) -\gamma_1 \Big( I_1(0)+\int_{0}^{t^*} \frac{d I_1}{ds}\, ds+\int_{t^*}^{t^{*}} \frac{d I_1}{ds}\, ds\Big) + \int_{t^*}^T \gamma_1 \Big(\frac{d I_1(t^{*}-)}{ds} - \frac{d I_1(t^{*}+)}{ds}\Big)\,dt \\
    & =  \gamma_1 I_1(t^{*}) -\gamma_1 \Big( I_1(0)+\int_{0}^{t^*} \frac{d I_1}{ds}\, ds\Big) - \gamma_1 S_1(t^{*}) w^{S}_{11}
    \beta_1 
    w^{I}_{21} I_{2}(t^{*})(T - t^{*})\\
    & =- \gamma_1 S_1(t^{*}) w^{S}_{11}
    \beta_1 
    w^{I}_{21} I_{2}(t^{*})(T - t^{*}).
    \end{align*}
By setting the derivative \eqref{DJ2} equal to zero, we obtain that a critical time $t^*$ satisfies

\begin{align}
   S_1(t^*) = \frac{ c_{test} N_2 w_{21} + I_2(t^*) c_t w_{21} }
{c_dd_I w_{11}^S \beta_1 w_{21}^I I_2(t^*) + c_dd_R\gamma_1 w_{11}^S \beta_1 w_{21}^I I_2(t^*)(T - t^*) }.
\label{critical_time_2}
\end{align}

\end{proof}
\begin{remark}
    The equilibrium equation ~\eqref{critical_time_2} determining the optimal time for suspending border testing has the same form as the equilibrium equation ~\eqref{critical_time_1} for the critical activation time in Case~1. In other words, the critical switching time can be determined by examining the relationship between the susceptible population in region~1, \(S_1\), and the infected population in region~2, \(I_2\), as described in Remark~\ref{remark case 1}.
\end{remark}

\subsubsection{Case 3: Joint Optimization of Activation and Deactivation of Border Testing}
We now consider a control \( \hat{u}_1 \) that is piecewise constant, switching from 0 to 1 at time \( t_1 \in (0, T) \), and back to 0 at time \( t_2 \in (t_1, T) \). In other words, the travel policy is implemented during the time interval \( [t_1, t_2] \), and inactive otherwise. That is,

\begin{equation}\label{C3}
    \hat{u}_1(t) =\begin{cases} 
    0 \text{ if } t \in [0, t_1)\\
    1 \text{ if } t \in [t_1, t_2]\\
    0 \text{ if } t \in [t_2, T].
    \end{cases}
    \end{equation}
We would like to find the optimal switching time \(t_1^{*}, t_2^{*}\) over \((0, T)\) that minimize \eqref{cost}.
\begin{theorem}
The optimal switching time $t_1^*,t_2^*\in(0,T)$ for a control of the form \eqref{C3} satisfies the following implicit equation:
\begin{align*}
    S_1(t) & = \frac{ c_{test} N_2 w_{21}+  c_t  I_{2}(t) w_{21}}{ c_d d_I w^{S}_{11}
    \beta_1 
    w^{I}_{21} I_{2}(t)
     + c_d d_R \gamma_1 w^{S}_{11}
    \beta_1
    w^{I}_{21} I_{2}(t) (T - t)}\,.
\end{align*}
\end{theorem}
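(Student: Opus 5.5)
Following the approach used for Cases~1 and~2, we restrict the cost \eqref{cost} to controls of the form \eqref{C3}. The functional then becomes a function of the two switching times:
\[
J(t_1,t_2)=\int_{t_1}^{t_2}\big(c_{test}N_2w_{21}+c_tI_2(t)w_{21}\big)\,dt+c_dd_RR_1(T)+c_dd_II_1(T).
\]
An interior optimum $(t_1^*,t_2^*)$ with $0<t_1^*<t_2^*<T$ must satisfy $\partial_{t_1}J=\partial_{t_2}J=0$. We compute the two partial derivatives separately. Each reduces to a one-switch computation already carried out: moving $t_1$ is exactly the situation of Case~1, and moving $t_2$ is exactly the situation of Case~2. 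In each case the other switching time is held fixed, and the control is unchanged outside a small neighbourhood of the switch being perturbed.

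For $\partial_{t_1}J$, we split $I_1(T)$ into integrals over $[0,t_1]$, $[t_1,t_2]$ and $[t_2,T]$ as in the proof for Case~1. Only the importation term $S_1w_{11}^S\beta_1w_{21}^I I_2$ changes its presence at $t_1$, where it switches off. This gives
\[
\frac{dI_1(T)}{dt_1}=S_1(t_1)w_{11}^S\beta_1w_{21}^II_2(t_1),
\qquad
\frac{dR_1(T)}{dt_1}=\gamma_1S_1(t_1)w_{11}^S\beta_1w_{21}^II_2(t_1)(T-t_1),
\]
the second following from the same double-integral manipulation that produced \eqref{DR1}. The running cost contributes $-(c_{test}N_2w_{21}+c_tI_2(t_1)w_{21})$. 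Setting the sum to zero yields the displayed identity at $t=t_1$.

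For $\partial_{t_2}J$, the situation is the reverse. The running cost contributes $+(c_{test}N_2w_{21}+c_tI_2(t_2)w_{21})$, and the importation term switches on at $t_2$. The terminal-cost derivatives are therefore the negatives of those above evaluated at $t_2$, exactly as computed in the proof for Case~2. Setting the result to zero gives the same identity at $t=t_2$. Hence both $t_1^*$ and $t_2^*$ are roots of $S_1(t)=F(t)$, with $F$ as in Remark~\ref{remark case 1}.

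The main obstacle is justifying the first-order sensitivity formulas. The displayed derivatives implicitly treat $S_1,I_1$ on $[t_i,T]$ as unaffected by the shift of the switch, apart from the jump in the right-hand side of $\dot I_1$. Strictly, perturbing a switching time also perturbs $S_1$, $I_1$, $S_2$ and $I_2$ downstream, and the exact derivative involves the variational equation, i.e.\ the adjoint $\lambda$. I would follow the paper's own convention, which neglects these second-order feedback effects, consistent with the order-of-magnitude reasoning earlier in the section. I would state explicitly that the identity is the first-order optimality condition under this approximation. I would also note that, since $S_1$ is decreasing and $F$ is continuous, the two roots correspond to successive crossings of $S_1$ and $F$, as in the existence discussion of Remark~\ref{remark case 1}.
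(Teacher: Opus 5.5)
Your argument is the paper's proof step for step. You use the same restriction of the cost to $J(t_1,t_2)$ and the same splitting of $I_1(T)$ and $R_1(T)$ at the two switches. You get the same jumps $\pm a_i$ in $\dot I_1$, with $a_i=S_1(t_i)w_{11}^S\beta_1w_{21}^II_2(t_i)$, the same factor $(T-t_i)$ from the nested integral, and the same two stationarity equations. Relative to the paper nothing is missing.

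What is wrong is the justification in your last paragraph for the key step, which the paper simply takes for granted. The terms lost by differentiating only the limits of integration are not second-order; they are first-order in $\delta t_i$. So the formulas for $\partial I_1(T)/\partial t_i$ and $\partial R_1(T)/\partial t_i$ are not derivatives of the actual terminal states. A quick check shows this:
\begin{itemize}
\item $S_1+I_1+R_1\equiv N_1$ for every $t_1$. Yet the same limits-only rule gives $\partial S_1(T)/\partial t_1=-a_1$, so together with your formulas $\partial_{t_1}(S_1+I_1+R_1)(T)=a_1\gamma_1(T-t_1)\neq 0$.
\item Applied directly to $R_1(T)=R_1(0)+\int_0^T\gamma_1I_1\,dt$, whose integrand is continuous at $t_1$, the rule would instead give $0$.
\end{itemize}
The factor $T-t_1$ appears only because $\dot I_1(s)$ is frozen for $s>t_1$, including its $-\gamma_1I_1$ part. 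Each extra infection therefore stays in $I_1$ forever while also feeding $R_1$. With any realistic recovery rate and the horizon of about $600$ days used in Section~\ref{S4}, $\gamma_1(T-t_1)\gg 1$, so no order-of-magnitude argument makes the discrepancy small.

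The exact computation is the standard switching-time derivative: $\partial_{t_1}J=H(x,0,\lambda)-H(x,1,\lambda)=-\phi(t_1)$ and $\partial_{t_2}J=\phi(t_2)$. Here $\phi$ is the switching function of Section~\ref{sec:analysis_in_measurable_control_setting}, evaluated along the trajectory of $\hat u_1$. The adjoint carries exactly the downstream effects you dropped: depletion of $S_1$, recovery out of $I_1$, secondary transmission, and the feedback through $I_2$ into the running cost. The genuine first-order conditions are therefore $\phi(t_1^*)=\phi(t_2^*)=0$, i.e.
\[
S_1(t_i^*)=\frac{c_{test}N_2w_{21}+c_tI_2(t_i^*)w_{21}}{\big(\lambda_1^2-\lambda_1^1\big)(t_i^*)\,w_{11}^S\beta_1w_{21}^II_2(t_i^*)},\qquad i=1,2.
\]
This is the stated identity only if $\lambda_1^2-\lambda_1^1$ is replaced by $c_dd_I+c_dd_R\gamma_1(T-t)$. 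The adjoint equations do not give this. Even with secondary transmission neglected, they give $\lambda_1^1\equiv 0$ and $\lambda_1^2=c_dd_R+(c_dd_I-c_dd_R)e^{-\gamma_1(T-t)}$, which stays bounded as $T$ grows. So either state the result in terms of $\phi$, or present the displayed identity explicitly as a heuristic shared with the paper's proof. Do not present it as the first-order optimality condition up to second-order terms.
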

\begin{proof}
In the class of admissible controls of the form \eqref{C3}, the cost functional becomes:
$$ J(t_1, t_2) = \int_{t_1}^{t_2}( c_{test} N_2 w_{21}+  c_t  I_{2}(t)w_{21}))\, dt + c_d  d_{R} R_1(T) + c_d  d_{I} I_1(T) .$$

Note that both $I_1(T)$ and $R_1(T)$ are implicitely dependent on $t_1$ and $t_2$. Indeed for $I_1(T)$ we have
\begin{align*} 
    I_1(T) = & I_1(0) + \int_0^{t_1} \frac{d I_1}{dt}\, dt + \int_{t_1}^{t_2} \frac{d I_1}{dt}\, dt + \int_{t_2}^{T} \frac{d I_1}{dt}\, dt\\
    =& I_1(0) + \int_0^{t_1} \Big(S_1(t) w^{S}_{11}
    \beta_1 
    w^{I}_{21} I_{2}(t) + S_1(t)   w_{12}^{S} 
    \beta_2w^{I}_{22}I_2(t) { + S_1(t)w_{11}^S \beta_1 w_{11}^I I_1(t)} -\gamma_1 I_1(t)\Big)\, dt \\
    & \quad + \int_{t_1}^{t_2} \Big( S_1(t)   w_{12}^{S} 
    \beta_2w^{I}_{22}I_2(t) { + S_1(t)w_{11}^S \beta_1 w_{11}^I I_1(t)} -\gamma_1 I_1(t)\Big)\, dt\\
     & \quad + \int_{t_2}^{T}  \Big(S_1(t) w^{S}_{11}
    \beta_1 
    w^{I}_{21} I_{2}(t) + S_1(t)   w_{12}^{S} 
    \beta_2w^{I}_{22}I_2(t) { + S_1(t)w_{11}^S \beta_1 w_{11}^I I_1(t)} -\gamma_1 I_1(t)\Big) \, dt\\
\end{align*}

which implies 

\begin{align*}
    \tfrac{\partial I_1(T)}{\partial t_1} 
    = S_1(t_1) w^{S}_{11}
    \beta_1 
    w^{I}_{21} I_{2}(t_1),
\end{align*}
\begin{align*}
    \tfrac{\partial I_1(T)}{\partial t_2} 
    = -  S_1(t_2) w^{S}_{11}
    \beta_1 
    w^{I}_{21} I_{2}(t_2).
\end{align*}

On the other hand for $R_1(T)$ we have

\begin{align*}
    R_1(T) = & R_1(0) + \int_0^{t_1} \frac{d R_1}{dt}\, dt + \int_{t_1}^{t_2} \frac{d R_1}{dt} \, dt + \int_{t_2}^{T} \frac{d R_1}{dt}\, dt\\
     =& R_1(0) + \int_0^{t_1}\gamma_1 I_1(t)\, dt + \int_{t_1}^{t_2} \gamma_1 I_1(t)\, dt + \int_{t_2}^{T} \gamma_1 I_1(t)\, dt\\
      =& R_1(0) + \gamma_1 \int_0^{t_1} I_1(t)\, dt + \gamma_1 \int_{t_1}^{t_2} \Big(I_1(0) + \int_{0}^{t_1} \tfrac{d I_1(s)}{ds} \, ds + \int_{t_1}^{t} \tfrac{d I_1(s)}{ds} \, ds\Big)\, dt\\
      & \quad + \gamma_1\int_{t_2}^{T}  \Big(I_1(0) + \int_{0}^{t_1} \tfrac{d I_1(s)}{ds} \, ds + \int_{t_1}^{t_2} \tfrac{d I_1(s)}{ds} \, ds +\int_{t_2}^{t} \tfrac{d I_1(s)}{ds} \Big) \, dt
\end{align*}

which implies 

\begin{align*}
    \tfrac{\partial R_1(T)}{\partial t_1} & =\gamma_1 I_1(t_1) - \gamma_1 \Big(I_1(0) + \int_{0}^{t_1} \tfrac{d I_1(s)}{ds} \, ds \Big) + \gamma_1 \int_{t_1}^{t_2} \Big( \tfrac{d I_1(t_{1-})}{ds} -  \tfrac{d I_1(t_{1+})}{ds} \Big)\, dt  \\
    & \quad + \gamma_1 \int_{t_2}^T \Big(\tfrac{dI_1(t_{1-})}{ds} - \tfrac{dI_1(t_{1+})}{ds}\Big) \, dt \\
    & =\gamma_1 \int_{t_1}^{t_2} \Big( \tfrac{d I_1(t_{1-})}{ds} -  \tfrac{d I_1(t_{1+})}{ds} \Big)\, dt + \gamma_1 \int_{t_2}^T \Big(\tfrac{dI_1(t_{1-})}{ds} - \tfrac{dI_1(t_{1+})}{ds}\Big) \, dt 
\\
    \tfrac{\partial R_1(T)}{\partial t_2} & =\gamma_1 \Big(I_1(0) + \int_{0}^{t_1} \tfrac{d I_1(s)}{ds} \, ds + \int_{t_1}^{t_2} \tfrac{d I_1(s)}{ds} \, ds\Big) - \gamma_1 \Big(I_1(0) + \int_{0}^{t_1} \tfrac{d I_1(s)}{ds} \, ds + \int_{t_1}^{t_2} \tfrac{d I_1(s)}{ds} \, ds \Big)\\
    & \quad + \gamma_1\int_{t_2}^T \Big( \tfrac{d I_1(t_{2-})}{ds} - \tfrac{d_1{t_{2+}}}{ds}\Big)\, dt\\
    & =  \gamma_1\int_{t_2}^T \Big( \tfrac{d I_1(t_{2-})}{ds} - \tfrac{dI_1{t_{2+}}}{ds}\Big)\, dt
\end{align*}

Noticing that 

\begin{align*}
    \frac{d}{dt} I_1(t_{1-}) & =S_1(t_1) w^{S}_{11}
    \beta_1
    w^{I}_{21} I_{2}(t_1) + S_1(t_1)   w_{12}^{S} 
    \beta_2w^{I}_{22}I_2(t_1)+ S_1(t)w_{11}^S \beta_1 w_{11}^I I_1(t)  -\gamma_1 I_1(t_1)\\
    \frac{d}{dt} I_1(t_{1+}) & =S_1(t_1)   w_{12}^{S} 
    \beta_2w^{I}_{22}I_2(t_1) +S_1(t)w_{11}^S \beta_1 w_{11}^I I_1(t)+  -\gamma_1 I_1(t_1)\\
    \frac{d}{dt} I_1(t_{2-}) & =S_1(t_2)   w_{12}^{S} 
    \beta_2w^{I}_{22}I_2(t_2)+S_1(t)w_{11}^S \beta_1 w_{11}^I I_1(t)  -\gamma_1 I_1(t_2)\\
    \frac{d}{dt} I_1(t_{2+}) & =S_1(t_2) w^{S}_{11}
    \beta_1 
    w^{I}_{21} I_{2}(t_2) + S_1(t_2)   w_{12}^{S} 
    \beta_2w^{I}_{22}I_2(t_2)+S_1(t)w_{11}^S \beta_1 w_{11}^I I_1(t)  -\gamma_1 I_1(t_2).
\end{align*}
Therefore, 

\begin{align*}
    \tfrac{d I_1(t_{1-})}{ds} -  \tfrac{d I_1(t_{1+})}{ds} & =S_1(t_1) w^{S}_{11}
    \beta_1
    w^{I}_{21} I_{2}(t_1)\,, \\
    \tfrac{d I_1(t_{2-})}{ds} -  \tfrac{d I_1(t_{2+})}{ds}  &= -S_1(t_2) w^{S}_{11}
    \beta_1 
    w^{I}_{21} I_{2}(t_2)\,, 
\end{align*}
and thus 
\begin{align*}
    \tfrac{\partial R_1(T)}{\partial t_1} & = \gamma_1 S_1(t_1) w^{S}_{11}
    \beta_1
    w^{I}_{21} I_{2}(t_1) (T - t_1)\,,\\
   \tfrac{\partial R_1(T)}{\partial t_2} & = -\gamma_1S_1(t_2) w^{S}_{11}
    \beta_1 
    w^{I}_{21} I_{2}(t_2) (T-t_2)\,.\\ 
\end{align*}
Taking the partial derivative of \(J\) with respect to \(t_1\) and \(t_2\), and setting them equal to zero we get

\begin{align*}
     \tfrac{\partial J(t_1)}{\partial t_1} & = - c_{test} N_2 w_{21}-  c_t  I_{2}(t_1) w_{21} + c_d d_I I_1(T)'(t_1) + c_dd_R R_1(T)'(t_1)\\
     & = -c_{test} N_2 w_{21}-  c_t  I_{2}(t_1) w_{21}\\
     &\quad  + c_d d_IS_1(t_1) w^{S}_{11}
    \beta_1 
    w^{I}_{21} I_{2}(t_1) \\
     & \quad + c_d d_R \gamma_1 S_1(t_1) w^{S}_{11}
    \beta_1
    w^{I}_{21} I_{2}(t_1) (T - t_1)=0
\end{align*}

\begin{align*}
     \tfrac{\partial J(t_2)}{\partial t_2}  & =   c_{test} N_2 w_{21}+  c_t  I_{2}(t_2) w_{21} + c_d d_I I_1(T)'(t_2) + c_dd_R R_1(T)'(t_2)\\
     & = c_{test} N_2 w_{21}+  c_t  I_{2}(t_2) w_{21}\,\\
     & \quad -c_d d_I S_1(t_2) w^{S}_{11}
    \beta_1 
    w^{I}_{21} I_{2}(t_2)\\
     & \quad -c_d d_R \gamma_1S_1(t_2) w^{S}_{11}
    \beta_1 
    w^{I}_{21} I_{2}(t_2) (T-t_2)=0,
\end{align*}

from which 

\begin{align*}
    S_1(t_1) & = \frac{ c_{test} N_2 w_{21}+  c_t  I_{2}(t_1) w_{21}}{ c_d d_I w^{S}_{11}
    \beta_1 
    w^{I}_{21} I_{2}(t_1)
     + c_d d_R \gamma_1 w^{S}_{11}
    \beta_1
    w^{I}_{21} I_{2}(t_1) (T - t_1)}\,,\\
    S_1(t_2) & = \frac{ c_{test} N_2 w_{21}+ c_t  I_{2}(t_2) w_{21}}{ \quad c_d d_I  w^{S}_{11}
    \beta_1 
    w^{I}_{21} I_{2}(t_2)
     +c_d d_R \gamma_1 w^{S}_{11}
    \beta_1 
    w^{I}_{21} I_{2}(t_2) (T-t_2)}.
\end{align*}

\end{proof}

\begin{remark}
Both switching times \(t_1\) and \(t_2\) satisfy equilibrium conditions of the same algebraic form.  
Their roles, however, are fundamentally different: \(t_1\) corresponds to a switch from zero control to full control, whereas \(t_2\) corresponds to a switch from full control back to zero.  
Thus, although the equations defining them are structurally identical, the underlying dynamics in which they arise are not.
\end{remark}

\section{Numerical simulations}\label{S4}

\subsection{The model}

To analyze the properties of the optimal policies under the piecewise-constant control framework introduced in Section~\ref{subsection: piecewise_controls}, the SIR model without vaccination, given by \eqref{eqn: SIR}, was implemented in a two-country setting. The optimization is carried out with respect to the cost functional defined in \eqref{cost}, which quantifies the effectiveness of testing strategies across the two countries. The parameter values used throughout this section are provided in Section~\ref{sec:analysis_in_measurable_control_setting}. We begin by summarizing the numerical implementation of the model, then describe the minimization of the cost functional for the three classes of admissible control sets introduced in Section~\ref{subsection: piecewise_controls}, and finally discuss the behavior of the cost as a function of the switching time in each control scenario.

\subsection{Model Implementation}
The system of ODEs given by \eqref{eqn: SIR} is simulated using standard approaches. The model is implemented in Python using scientific computing libraries, SciPy and NumPy \cite{2020SciPy-NMeth, harris2020array}, and the system of ODEs is solved using SciPy's \textbf{solve\_ivp} function. To organize the model setup, a Python class is created that stores the model parameters, initial conditions, and cost outputs from the simulations.
To simplify the implementation of the numerical optimization, the piecewise-constant testing controls (\(u_i\)) are implemented using NumPy’s Heaviside function, with appropriate left or right shifts to represent the switching times. The cost functional is then interpreted as a function of the switching time of the testing policy, and the resulting scalar function is minimized numerically. For comparison with the analytical switching times, the zeros of the implicit equation~\eqref{OST1} are computed using SciPy’s fsolve routine. \\
In the Case~1 scenario, the system of ODEs \eqref{eqn: SIR} is solved without any control, and the populations \(S_1, I_1, I_2\) are treated as time-dependent functions. This is consistent with the definition of Case~1, where no intervention occurs for \(t \in [0, t^{*}]\), so the dynamics coincide with those of the uncontrolled system over this interval.
Similarly, in the Case~2 scenario, the system is solved with the control active from the beginning, and the corresponding populations \(S_1, I_1, I_2\) are interpreted as functions of time in the implicit equation. 
An analogous approach is adopted for the Case~3 scenario. First, the critical time \(t_1^{*}\), corresponding to the onset of control, is obtained in the same way as in Case~1. The system is then solved with the control activated at \(t_1^{*}\) and maintained for the remainder of the time horizon. The resulting populations \(S_1, I_1, I_2\) are subsequently treated as functions of time to compute the second critical time \(t_2^{*}\) using equation~\eqref{OST1}.\\
The results of the model simulation are presented as in Figure \ref{fig:simulation_output}, where the left axis gives information for the compartmental populations for Regions 1 and 2, and the right axis provides the value of the control throughout the simulation. Additionally, the solution to \eqref{OST1} is represented by a vertical dotted black line.

\begin{figure}[!htb]
    \centering
    \includegraphics[width=0.9\linewidth, trim={6cm, 2cm, 5cm, 2cm}, clip]{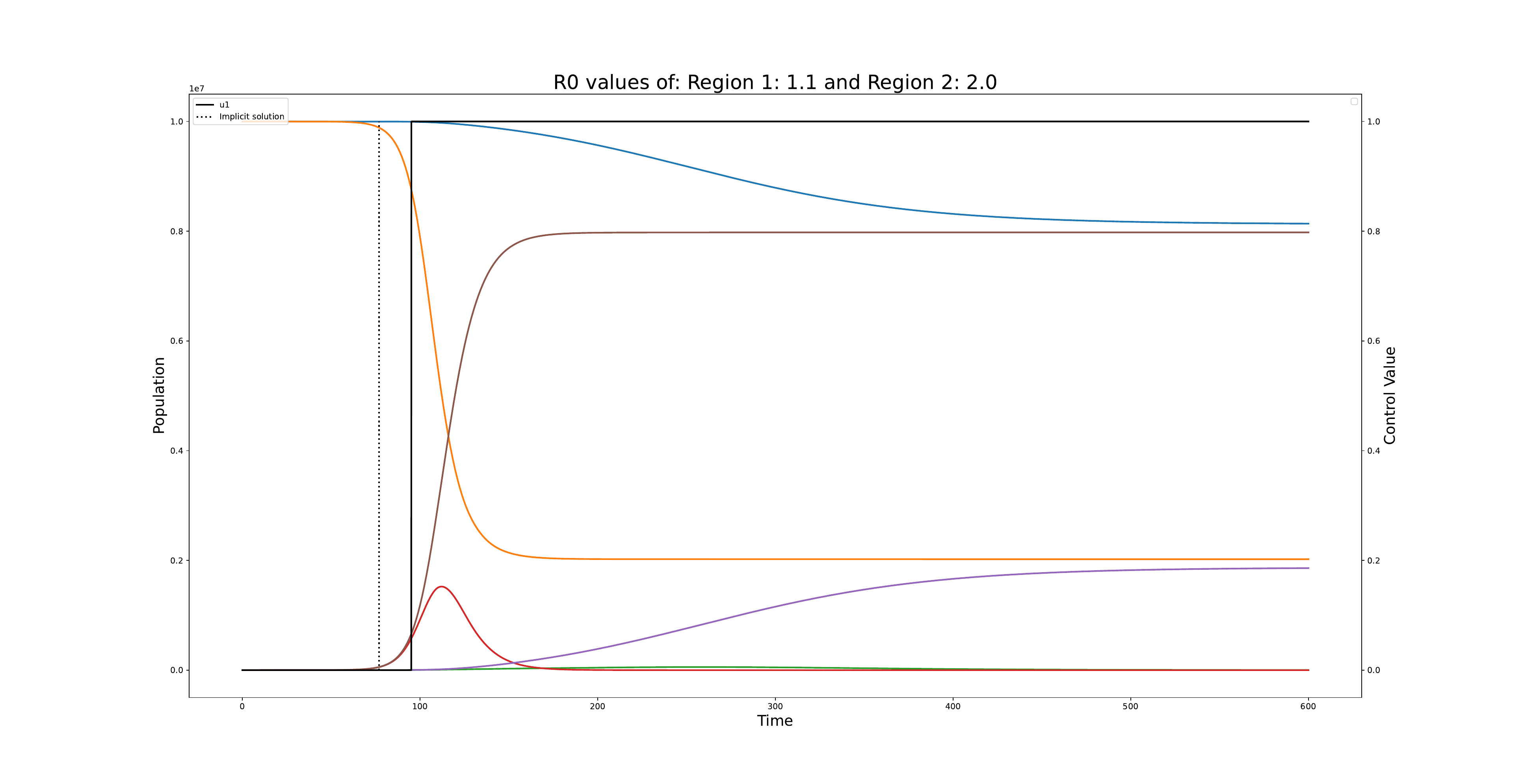}
    \caption{Output of the model simulation. The left-axis gives the region populations over time; Region 1 susceptible, infected, recovered in blue, green, purple; Region 2 susceptible, infected, recovered in orange, red, brown (respectively). The numerical optimal testing policy for Case 1 is given by the piecewise constant function in solid black; the solution for $t^*$ of Equation \eqref{OST1} is represented by the vertical dotted black line.}
    \label{fig:simulation_output}
\end{figure}

%

\subsection{Model Behavior}
This section presents both qualitative and quantitative descriptions of the model. The model behavior depends on the parameters introduced in Section~\ref{sec:analysis_in_measurable_control_setting}. We analyze the system under the following setting. The disease originates in Region~2 (through a nonzero initialization of the \(I_2\) population), while Region~1 has a lower basic reproduction number (\(R_0\)) than Region~2, representing a country able to mitigate disease spread through additional measures such as social distancing. The populations of the two regions are of comparable magnitude, and the simulation time horizon is approximately 600~days. 
Given the large number of parameters and the model’s sensitivity to their values, we report results specifically for this representative parameter set. Additional details are provided regarding the parameters that influence the switching times of the control variables.\\
We are interested in examining the optimal control policy for each scenario under the above assumptions on the model parameterization. To begin, the numerical optimum of the control policies is computed and compared with the uncontrolled simulations to assess whether the controls produce a meaningful reduction in the overall cost. For Cases~1 and~2, the cost functional in \eqref{cost} is treated as a function of the switching time in the testing policy. A scalar minimization routine from SciPy is then applied over a bounded interval around the “wave’’ of the pandemic to determine the optimal switching time. 
Figure~\ref{fig:combined_cost_profiles_case_0} illustrates the typical shape of the cost profile for Case~1, highlighting how the location of the local minimum depends on the assumed death rate in the simulation. We restrict the search to local minima due to the structure of the control functions. In the Case~1 scenario, the cost profile is observed to be monotonically decreasing after the first infection wave (see Figure~\ref{fig:combined_cost_profiles_case_0}).A possible explanation is that, since the control in Case~1 is activated once at a specific time and remains on thereafter, activating the control after the primary infection wave has little impact on disease dynamics. Consequently, a later activation leads to a lower overall cost, as the testing-related costs (see the integral term in Equation~\ref{cost}) dominate once the infection has subsided. 
A similar pattern is observed for the Case~2 scenario. However, in this case, the cost is monotonically \emph{increasing} after the main infection wave. This behavior arises because the control is active from the beginning and is switched off at \(t^{*}\). When \(t^{*}\) occurs after the peak of infections, extending the control period only accumulates additional testing costs without significantly influencing the epidemic dynamics.

\begin{figure}[!htb]
    \centering
    \includegraphics[width=0.9\linewidth]{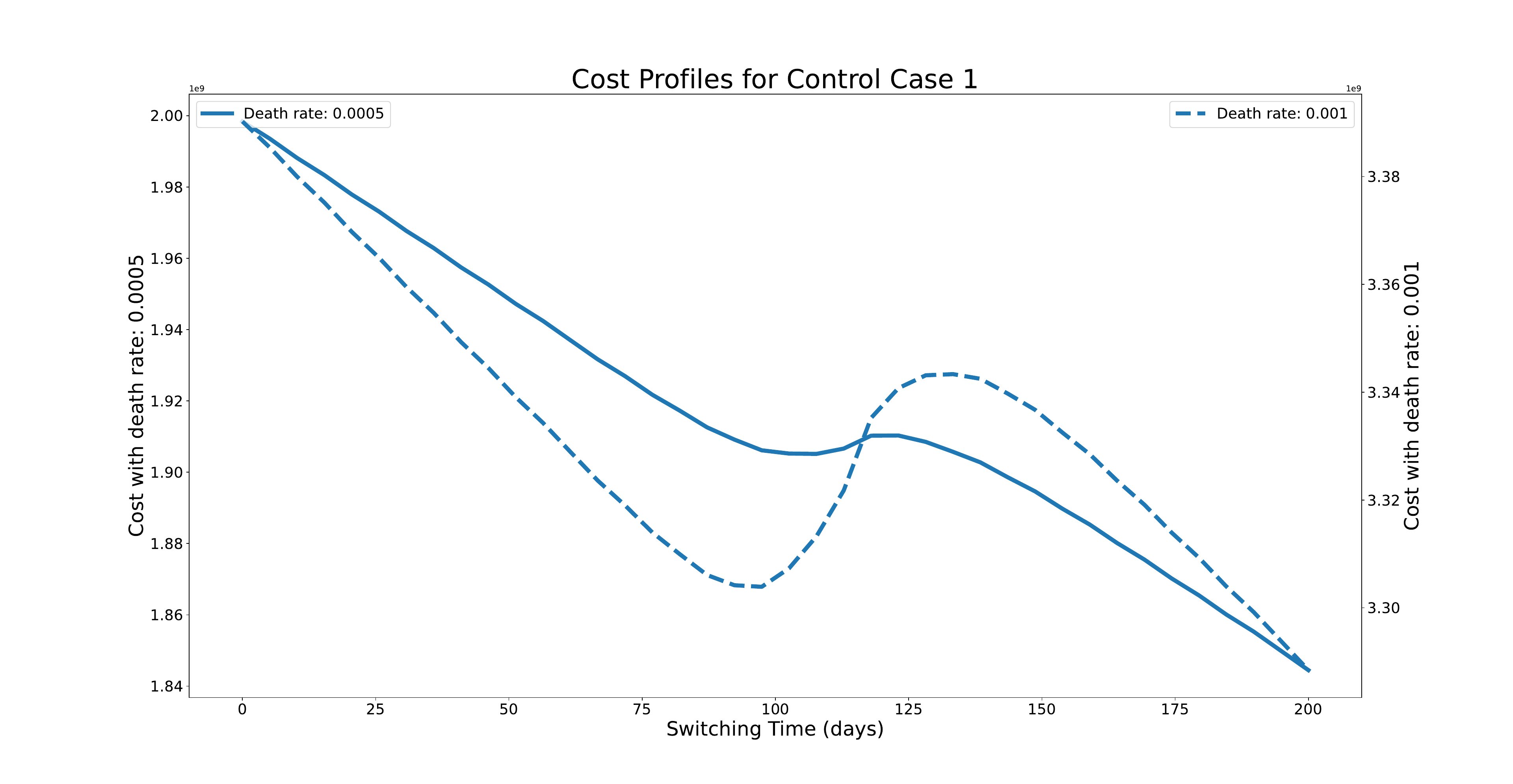}
    \caption{Cost profile for Case 1 control policy with death rates of $\frac{1}{2000}$ and $\frac{1}{1000}$.}
    \label{fig:combined_cost_profiles_case_0}
\end{figure}

In order to qualitatively investigate Cases 1 and 2 of the testing policies, the cost functional is visualized as a function of the switching time for each control. This is done similarly to the numerical optimization. However, instead of minimizing this function, it is evaluated at many points in the switching-time domain to generate a plot, and we refer to this plot as a ``cost profile.'' Note the local minima of the cost profile, which roughly align with the main wave of the infection through Region 2. Figure \ref{fig:combined_cost_profiles_case_0} demonstrates the dependence of the local minima of the cost functional on the overall death rate for the simulation. As the death rate increases, it becomes more beneficial to activate the testing policy (the local minima are lower for the higher death rate, as in Figure \ref{fig:combined_cost_profiles_case_0}, dotted line). Additionally, as the death rate increases, the numerical optimal switching time shifts to the left, indicating that earlier control is more beneficial.
\begin{figure}
    \centering
    \includegraphics[width=0.9\linewidth, trim={6cm, 2cm, 5cm, 2cm}, clip]{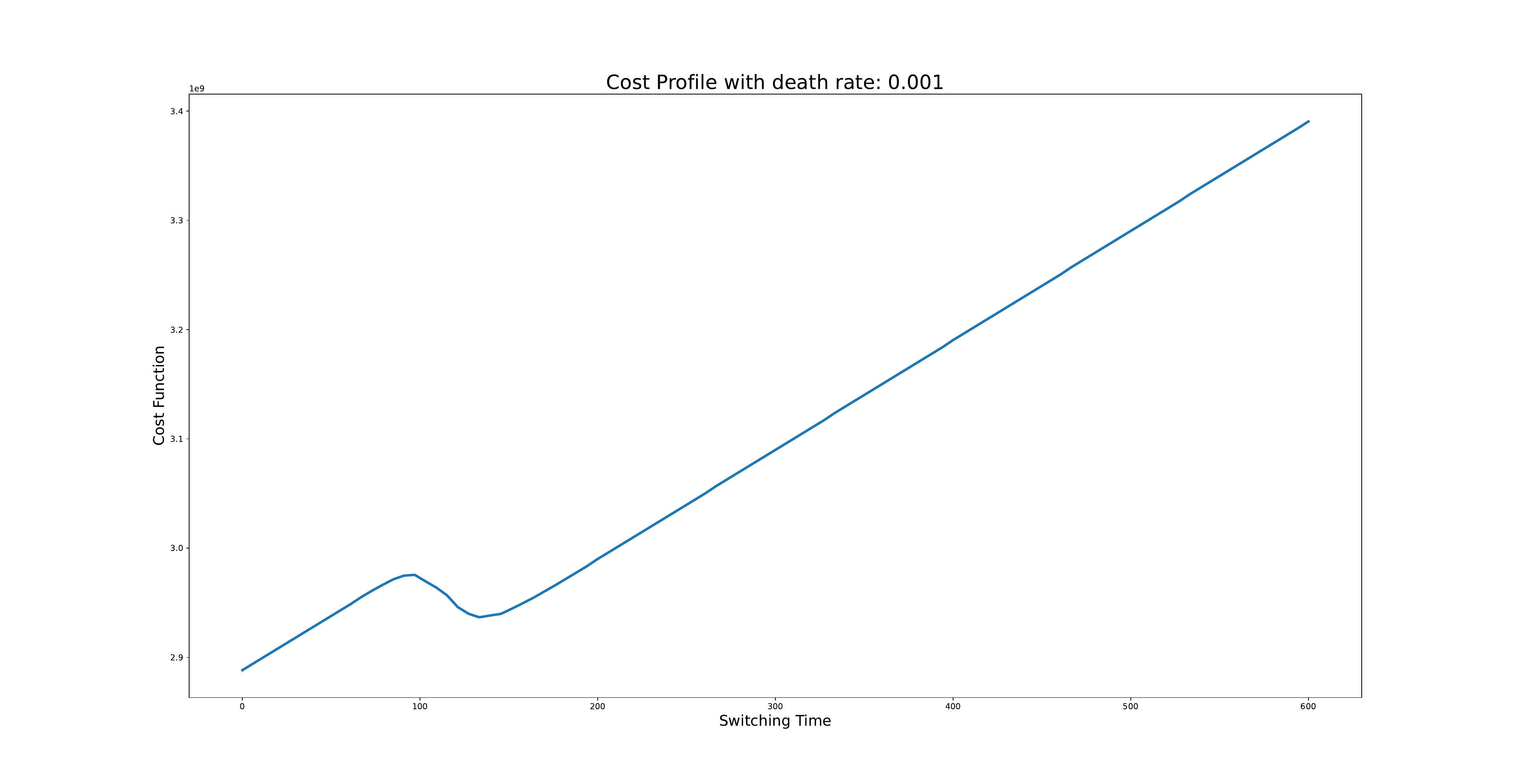}
    \caption{Cost profile for Case 2 with a moderate death rate.}
    \label{fig:cost_profile_case_1_moderate}
\end{figure}
The cost profile for Case 2 testing policies looks similar to those of Case 1, see Figure \ref{fig:cost_profile_case_1_moderate}. Again, there is a local minimum that roughly corresponds to a switching time near the wave of infection within region 2.
The total cost for the numerically optimal testing policies is reported in Table \ref{tab:testing_policy_costs}, noting that Case 3's control policy (this policy begins with no restriction on travel, then has a period of restriction followed by returning to a period of no restriction) minimizes the cost during the simulation.
\begin{table}[!htb]
    \centering
    \begin{tabular}{|c|c|c|}
        \hline
        Testing Policy & Switching Times & Optimal Cost \\
        \hline \hline
        Case 1 & 87.7 & 6092476832 \\
        \hline
        Case 2 & 148.2 & 5729567942 \\
        \hline
        Case 3 & 80.7, 133.0 & 5653997519 \\
        \hline
    \end{tabular}
    \caption{Table of costs for each testing policy case}
    \label{tab:testing_policy_costs}
\end{table}

\bibliographystyle{plain}
\bibliography{ref.bib}
\end{document}